\NeedsTeXFormat{LaTeX2e}
\documentclass[11pt]{article}
\textheight 22truecm \textwidth 14truecm
\usepackage{graphicx}
\usepackage{amsmath,amsxtra,amssymb,latexsym, amscd,amsthm}

\usepackage[colorlinks=true,citecolor=black,linkcolor=black,urlcolor=blue]{hyperref}


\theoremstyle{plain}
\newtheorem{thm}{Theorem}[section]

\newtheorem{lem}[thm]{Lemma}

\theoremstyle{definition}

\theoremstyle{remark}
\newtheorem{rmk}{Remark}

\textheight 22truecm \textwidth 14truecm
\numberwithin{equation}{section}
\numberwithin{figure}{section}
\numberwithin{table}{section}

\newcommand{\M}{\operatorname{M}}

\pagestyle{myheadings}
\begin{document}
\setlength{\baselineskip}{13truept}
\title{\bf Enumeration of Hybrid Domino-Lozenge Tilings II: Quasi-octagonal regions}

\author{TRI LAI\\Institute for Mathematics and Its Applications\\University of Minnesota\\Minneapolis, MN 55455\\tlai@umn.edu}
\date{\today}
\maketitle
\begin{abstract}
We use the subgraph replacement method to prove a simple product formula for the tilings of an  8-vertex counterpart of Propp's quasi-hexagons (Problem 16 in \textit{New Perspectives in Geometric Combinatorics}, Cambridge University Press, 1999), called quasi-octagon.

\bigskip\noindent \textbf{Keywords:} perfect matchings, tilings, dual graphs, Aztec diamonds, Aztec rectangles, quasi-octagons.
\end{abstract}


\section{Introduction}

We are interested in regions on the square lattice with southwest-to-northeast diagonals drawn in. In 1996, Douglas \cite{Doug} proved a conjecture posed by Propp on the number of tilings of a certain family of regions in the square lattice with every second \textit{diagonal}\footnote{From now on,  the term ``diagonal" will be used to mean ``southwest-to-northeast drawn-in diagonal".}. In particular, Douglas showed that the region of order $n$ (shown in Figure \ref{douglas}) has $2^{2n(n+1)}$ tilings. In 1999, Propp listed 32 open problems in enumeration of perfect matchings and tilings  in his survey paper \cite{Propp}. Problem 16 on the list asks for a formula for the number of tilings of a certain quasi-hexagonal region on the square lattice with every third diagonal. The problem has been solved and generalized by author of the paper (see \cite{Tri})  for a certain quasi-hexagons in which distances\footnote{The unit here is  $\sqrt{2}/2$.} between two successive diagonals are arbitrary. The method, subgraph replacement method, provided also a generalization of Douglas' result above.

\begin{figure}\centering
\begin{picture}(0,0)%
\includegraphics{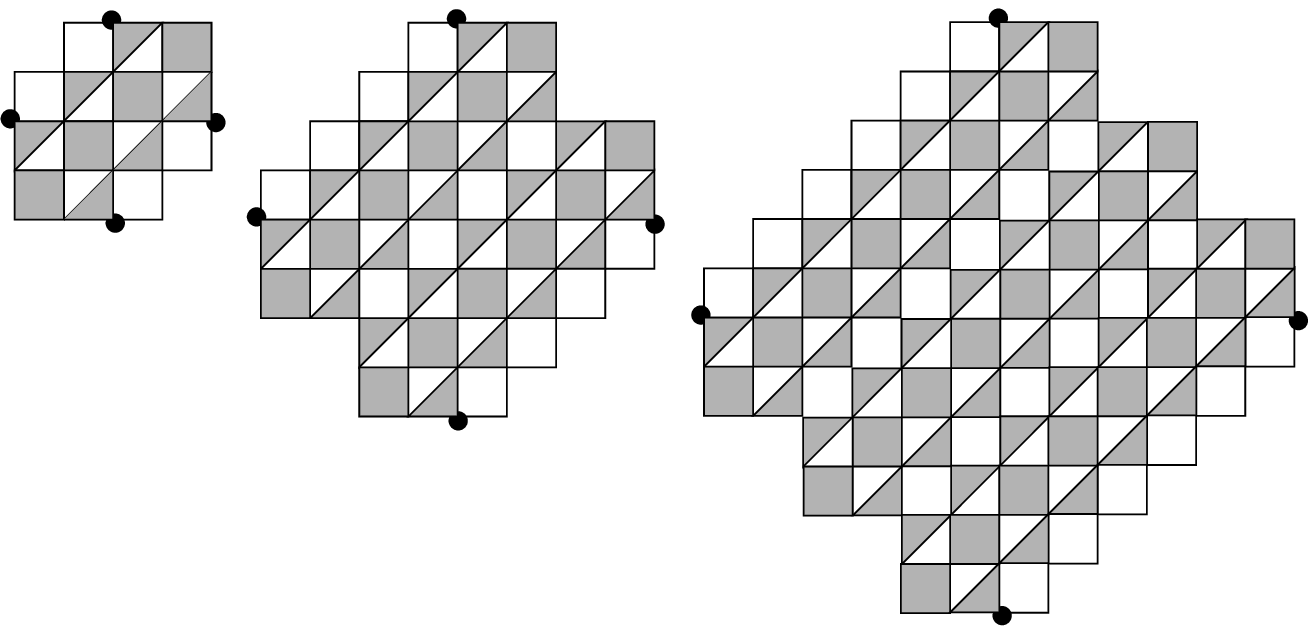}%
\end{picture}%
\setlength{\unitlength}{3947sp}%
\begingroup\makeatletter\ifx\SetFigFont\undefined%
\gdef\SetFigFont#1#2#3#4#5{%
  \reset@font\fontsize{#1}{#2pt}%
  \fontfamily{#3}\fontseries{#4}\fontshape{#5}%
  \selectfont}%
\fi\endgroup%
\begin{picture}(6283,2966)(402,-2292)
\put(2363,-1641){\makebox(0,0)[lb]{\smash{{\SetFigFont{12}{14.4}{\familydefault}{\mddefault}{\updefault}{$n=2$}%
}}}}
\put(6260,-1878){\makebox(0,0)[lb]{\smash{{\SetFigFont{12}{14.4}{\familydefault}{\mddefault}{\updefault}{$n=3$}%
}}}}
\put(828,-696){\makebox(0,0)[lb]{\smash{{\SetFigFont{12}{14.4}{\familydefault}{\mddefault}{\updefault}{$n=1$}%
}}}}
\end{picture}%
\caption{The Douglas' regions of order $n=1$, $n=2$ and $n=3$.}
\label{douglas}
\end{figure}

In this paper, we use the subgraph replacement method to enumerate tilings of a new family of 8-vertex regions that are inspired by Propp's quasi-hexagons (see \cite{Propp}, \cite{Tri}) and defined in the next two paragraphs.

\begin{figure}\centering
\begin{picture}(0,0)%
\includegraphics{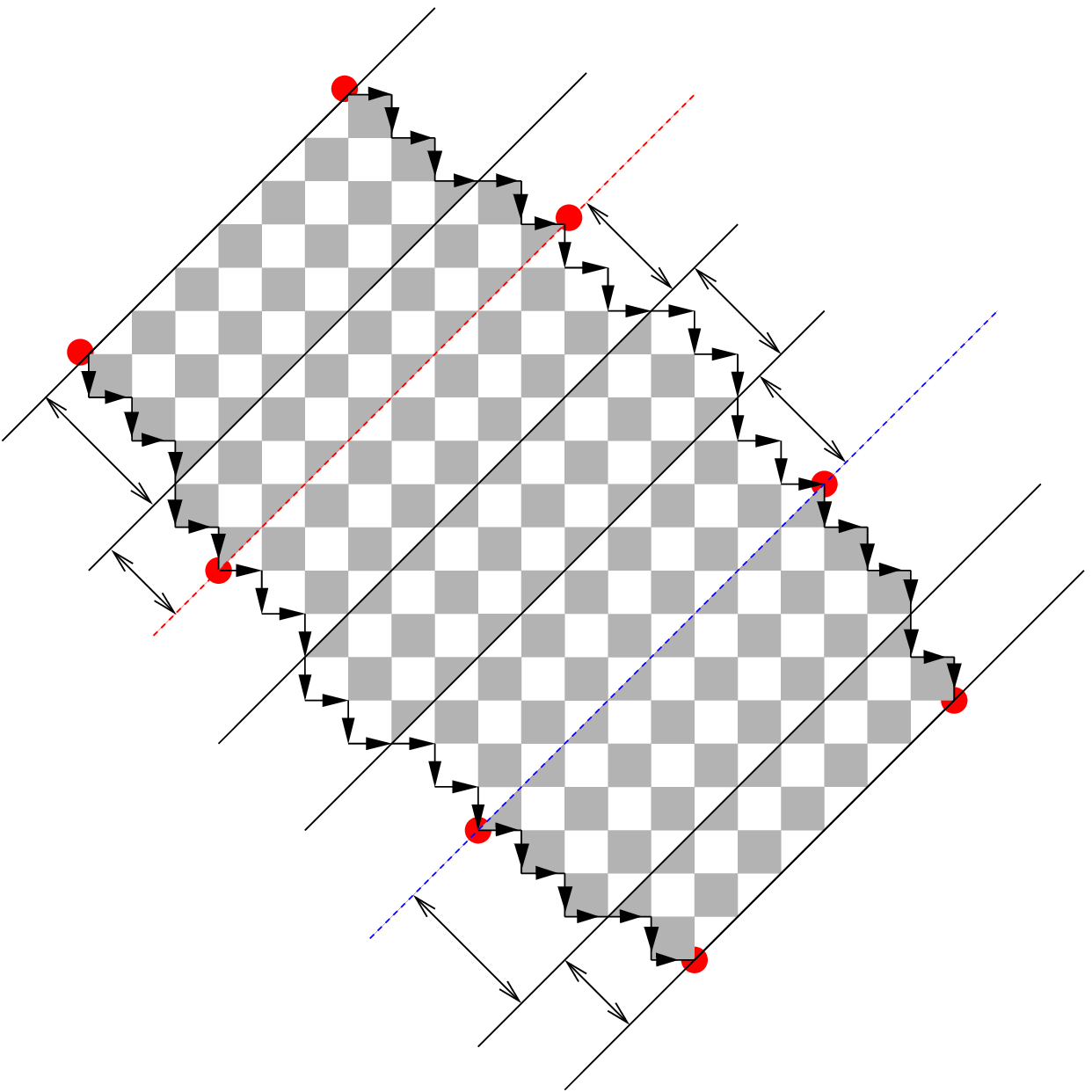}%
\end{picture}%
\setlength{\unitlength}{3947sp}%
\begingroup\makeatletter\ifx\SetFigFont\undefined%
\gdef\SetFigFont#1#2#3#4#5{%
  \reset@font\fontsize{#1}{#2pt}%
  \fontfamily{#3}\fontseries{#4}\fontshape{#5}%
  \selectfont}%
\fi\endgroup%
\begin{picture}(5929,5930)(698,-6496)
\put(2245,-814){\makebox(0,0)[lb]{\smash{{\SetFigFont{12}{14.4}{\rmdefault}{\mddefault}{\updefault}{$A$}%
}}}}
\put(3781,-1523){\makebox(0,0)[lb]{\smash{{\SetFigFont{12}{14.4}{\rmdefault}{\mddefault}{\updefault}{$B$}%
}}}}
\put(5552,-3177){\makebox(0,0)[lb]{\smash{{\SetFigFont{12}{14.4}{\rmdefault}{\mddefault}{\updefault}{$C$}%
}}}}
\put(6143,-4830){\makebox(0,0)[lb]{\smash{{\SetFigFont{12}{14.4}{\rmdefault}{\mddefault}{\updefault}{$D$}%
}}}}
\put(4725,-6011){\makebox(0,0)[lb]{\smash{{\SetFigFont{12}{14.4}{\rmdefault}{\mddefault}{\updefault}{$E$}%
}}}}
\put(2954,-5185){\makebox(0,0)[lb]{\smash{{\SetFigFont{12}{14.4}{\rmdefault}{\mddefault}{\updefault}{$F$}%
}}}}
\put(1773,-4088){\makebox(0,0)[lb]{\smash{{\SetFigFont{12}{14.4}{\rmdefault}{\mddefault}{\updefault}{$G$}%
}}}}
\put(828,-2350){\makebox(0,0)[lb]{\smash{{\SetFigFont{12}{14.4}{\rmdefault}{\mddefault}{\updefault}{$H$}%
}}}}
\put(4489,-1287){\makebox(0,0)[lb]{\smash{{\SetFigFont{12}{14.4}{\rmdefault}{\mddefault}{\updefault}{$\ell$}%
}}}}
\put(1064,-3177){\makebox(0,0)[lb]{\smash{{\SetFigFont{12}{14.4}{\rmdefault}{\mddefault}{\updefault}{$d_1$}%
}}}}
\put(1301,-3885){\makebox(0,0)[lb]{\smash{{\SetFigFont{12}{14.4}{\rmdefault}{\mddefault}{\updefault}{$d_2$}%
}}}}
\put(4371,-1759){\makebox(0,0)[lb]{\smash{{\SetFigFont{12}{14.4}{\rmdefault}{\mddefault}{\updefault}{$\overline{d}_1$}%
}}}}
\put(5080,-1996){\makebox(0,0)[lb]{\smash{{\SetFigFont{12}{14.4}{\rmdefault}{\mddefault}{\updefault}{$\overline{d}_2$}%
}}}}
\put(5454,-2581){\makebox(0,0)[lb]{\smash{{\SetFigFont{12}{14.4}{\rmdefault}{\mddefault}{\updefault}{$\overline{d}_3$}%
}}}}
\put(3072,-5893){\makebox(0,0)[lb]{\smash{{\SetFigFont{12}{14.4}{\rmdefault}{\mddefault}{\updefault}{$d'_2$}%
}}}}
\put(3662,-6129){\makebox(0,0)[lb]{\smash{{\SetFigFont{12}{14.4}{\rmdefault}{\mddefault}{\updefault}{$d'_1$}%
}}}}
\put(6129,-2589){\makebox(0,0)[lb]{\smash{{\SetFigFont{12}{14.4}{\rmdefault}{\mddefault}{\updefault}{$\ell'$}%
}}}}
\end{picture}%
\caption{The quasi-octagon $\mathcal{O}_{6}(5,3;\ 4,4,4;\ 3,5)$.}
\label{octagonfig}
\end{figure}

We consider two distinguished diagonals $\ell$ and $\ell'$, so that $\ell'$ is below $\ell$.  Draw in $k$ diagonals above $\ell$, with the distances between consecutive ones, starting from top, being $d_1,\dotsc,d_k$; and $l$ diagonals below $\ell'$,  with the distances between consecutive ones, starting from the bottom, being $d'_1,\dotsc,d'_l$. Draw in also $t-1$ additional diagonals between $\ell$ and $\ell'$, so that the distances between successive ones (starting from top) are $\overline{d}_1,\dotsc,\overline{d}_t$ (see Figure \ref{octagonfig}). To make the setup of diagonals well-defined, we assume that the distance between $\ell$ and $\ell'$ is at least $t$.

Next, we color the resulting dissection of the square lattice black and white, so that any two fundamental regions sharing an edge have opposite colors. Without loss of generality, we can assume that the triangles pointing toward $\ell$ and  having bases on the top diagonal are white. Let $A$ be a lattice point on the top diagonal.  We draw a lattice path with unit steps south or east  from $A$ so that at each step the black fundamental region is on the right. The lattice path meets $\ell$ at a lattice point $B$. Continue from $B$ to a vertex $C$ on $\ell'$ in same fashion, with the difference that the black fundamental region is now on the left at each step. Finally, we go from $C$ to a vertex $D$ on the bottom diagonal in the same way with the black fundamental region is on the right at each step. The described path from $A$ to $D$ is the northeastern boundary of the region.

Pick a lattice point $H$ on the top diagonal to the left of $A$ so that the Euclidean distance between them is $|AH|=a\sqrt{2}$. The southwestern boundary is defined analogously, going from $H$ to a point $G\in \ell$, then to a point $F\in \ell'$, and to a point $E$ on the bottom diagonal. One can see that the southwestern boundary is obtained by reflecting the northeastern one about the perpendicular bisector of  the segment $AH$.  The segments $AH$ and $DE$ complete the boundary of the region, which we denote by  $\mathcal{O}_{a}(d_1,\dotsc,d_k;\ \overline{d}_1,\dotsc, \overline{d}_t;\ d'_1,\dotsc,d'_l)$ (see Figure \ref{octagonfig} for an example) and call a \textit{quasi-octagon}.

Call the fundamental regions inside $\mathcal{O}_{a}(d_1,\dotsc,d_k;\ \overline{d}_1,\dotsc, \overline{d}_t;\ d'_1,\dotsc,d'_l)$ \textit{cells}, and call them black or white according to the coloring described above.  Note that there are two kinds of cells, square and triangular. The latter in turn come in two orientations: they may point towards $\ell'$ or away from $\ell'$.  A cell is called \textit{regular} if it is a square cell or a triangular cell pointing away from $\ell'$.

A \textit{row of cells} consists of all the triangular cells of a given color with bases resting on a fixed lattice diagonal, or of all the square cells (of a given color) passed through by a fixed lattice diagonal.

\begin{rmk}
If the triangular cells running along the bottom diagonal  of a quasi-octagon are black, then we can not cover these cells by disjoint tiles, and the region has no tilings. \textit{Therefore, from now on, we assume that the triangular cells running along the bottom diagonal are white.} This is equivalent to the the fact that the last step of the southwestern boundary is an east step.
\end{rmk}


The \textit{upper, lower}, and \textit{middle parts} of the region are defined to be the portions above $\ell$, below $\ell'$, and  between $\ell$ and $\ell'$ of the region.  We define \textit{the upper} and \textit{lower heights} of our region to be the numbers of rows of black regular cells in the upper and lower parts.  The \textit{middle height} is the number of rows of \textit{white} regular cells in the middle part. Denote by $h_1(\mathcal{O}),$ $h_2(\mathcal{O})$ and $h_3(\mathcal{O})$ the upper, middle and lower heights of a quasi-octagon $\mathcal{O}$, respectively. Define also \textit{the upper} and\textit{ lower widths} of $\mathcal{O}$ to be $w_1(\mathcal{O})=|BG|/\sqrt{2}$ and $w_2(\mathcal{O})=|CF|/\sqrt{2}$, where $|BG|$ and $|CF|$ are the Euclidean lengths of the segments $BG$ and $CF$, respectively. For example, the quasi-octagon in Figure \ref{octagonfig} has the upper, middle and lower heights 5,6,5, respectively, and has the upper and lower widths equal to $8$.

The main result of the paper concerns the number of tilings of quasi-octagons whose upper and lower widths are equal. In this case, the number of tilings is given by a simple product formula (unfortunately, in general, a quasi-octagon  does not lead to a simple product formula).

The number of tilings of a quasi-octagon with equal widths is given by the theorem stated below.

\begin{thm}\label{octagon1}
Let $a$, $d_1,$ $\dotsc$, $d_k$; $\overline{d}_1$, $\dotsc$, $\overline{d}_t$; $d'_1$, $\dotsc$, $d'_l$ be positive integers for which the quasi-octagon $\mathcal{O}:=\mathcal{O}_{a}(d_1,\dotsc,d_k;\ \overline{d}_1,\dotsc, \overline{d}_t;\ d'_1,\dotsc,d'_l)$ has the upper, the middle, and the lower heights $h_1,h_2,h_3$, respectively, and has both upper and lower widths equal to $w$, with $w>h_1,h_2,h_3$.

$(a)$ If $h_1+h_3\not=w+h_2$, then $\mathcal{O}$ has no tilings.

$(b)$ If $h_1+h_3=w+h_2$, then the number of tilings of $\mathcal{O}$ is equal to
\begin{align}\label{mainoc2}
&2^{\mathcal{C}_1+\mathcal{C}_2+\mathcal{C}_3-h_1(2w-h_1+1)/2-h_2(2w-h_2+1)/2-h_3(2w-h_3+1)/2}\notag\\
&\times2^{\binom{h_1+2h_2+h_2}{2}-2h_2(w+h_2)-\binom{h_1+h_2}{2}-\binom{h_2+h_3}{2}}\notag\\
&\times\dfrac{\prod_{i=h_2+h_3+1}^{h_2+w}(i-1)!\prod_{i=h_1+h_2+1}^{h_2+w}(i-1)!\prod_{i=1}^{w-h_3}(i-1)!\prod_{i=1}^{w-h_1}(i-1)!}
{\prod_{i=1}^{w-h_2}(h_2+i-1)!(w-i)!},
\end{align}
where $\mathcal{C}_1$ is the numbers of black regular cells in the upper part, $\mathcal{C}_2$ is the number of  white regular cells in the middle part, and $\mathcal{C}_3$ is the number of black regular cells in the lower part of the region.
\end{thm}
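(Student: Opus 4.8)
The plan is to translate the tiling problem into a perfect-matching problem on the planar dual graph $G=G(\mathcal{O})$, whose vertices are the cells of $\mathcal{O}$ and whose edges join cells sharing an edge; tilings of $\mathcal{O}$ are in bijection with perfect matchings of $G$, and since adjacent cells have opposite colors, $G$ is bipartite with color classes the black and white cells. The whole argument then reduces to computing $\operatorname{M}(G)$, the number of perfect matchings, and the engine for this is the subgraph replacement method: I would transform $G$ through a sequence of local moves --- forced-edge removal, vertex splitting, and urban renewal (spider moves) --- each of which alters $\operatorname{M}$ by an explicit, easily tracked factor, until what remains is an Aztec rectangle $\AR$ whose matchings are already enumerated by a product formula.

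First, part $(a)$. Because the triangular cells along the bottom diagonal are white (by the standing Remark) and the northeastern and southwestern boundaries are forced zig-zags, a band of tiles along the boundary is forced; after deleting these forced tiles and the cells they cover, I am left with a bipartite core graph $G'$ with $\operatorname{M}(G)=\operatorname{M}(G')$. A perfect matching can exist only if the two color classes of $G'$ have equal size, so I would count the black and white regular cells of $G'$ in terms of $h_1,h_2,h_3$ and $w$. The point is that in the upper and lower parts the black regular cells form the deficient class while in the middle part the white regular cells do, and balancing the two classes yields exactly $h_1+h_3=w+h_2$. When this equality fails, the color classes differ in size, $\operatorname{M}(G')=0$, and $\mathcal{O}$ has no tiling.

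For part $(b)$, assume the balance condition and process the three parts in turn. Each row of triangular cells, together with the square cells abutting the corresponding diagonal, forms a local pattern that I replace by a simpler gadget: vertex splitting separates the two triangles of each cut square, and a sequence of urban-renewal moves on the resulting four-cycles collapses the diagonal structure into a plain (Aztec) square-lattice structure. Each urban renewal contributes a factor of $2$ after normalizing edge weights, and the number of such moves is governed by the counts $\mathcal{C}_1,\mathcal{C}_2,\mathcal{C}_3$ of regular cells in the three parts, while the quadratic corrections $h_i(2w-h_i+1)/2$ and the binomial terms arise from the weights created where replacements overlap and from boundary adjustments; tallying these is what produces the two powers of $2$ in the statement. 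After all replacements, $G$ is converted (up to the recorded scalar) into a single Aztec rectangle, more precisely a rectangular Aztec graph with $w-h_2$ rows carrying a prescribed set of boundary dents determined by $h_1,h_2,h_3$. The final step is to evaluate the matchings of this reduced graph by invoking (or proving as a preliminary lemma) the standard product enumeration of dented Aztec rectangles, whose matching number is the hyperfactorial ratio
\[
\frac{\prod_{i=h_2+h_3+1}^{h_2+w}(i-1)!\,\prod_{i=h_1+h_2+1}^{h_2+w}(i-1)!\,\prod_{i=1}^{w-h_3}(i-1)!\,\prod_{i=1}^{w-h_1}(i-1)!}{\prod_{i=1}^{w-h_2}(h_2+i-1)!\,(w-i)!},
\]
and then to multiply by the accumulated powers of $2$.

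The main obstacle I anticipate is precisely this middle reduction-and-bookkeeping step: keeping the edge weights consistent across the three parts as urban-renewal moves propagate across $\ell$ and $\ell'$, verifying that the overlapping replacements glue into \emph{one} genuine Aztec rectangle rather than three disconnected pieces (this is exactly where the equal-width hypothesis $w_1=w_2=w$ is used, and is the reason a product formula appears only in this case), and confirming that the net exponent of $2$ collapses to the stated combinatorial expression. Getting the forced region of part $(a)$ exactly right is the other delicate point, since the core on which the color count is performed must be the same core that feeds into the replacements, and an off-by-one in the forced band would corrupt every subsequent factor.
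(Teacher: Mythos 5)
Your overall strategy---pass to the dual graph, perform subgraph replacements (vertex splitting, urban renewal, forced-edge removal) while tracking explicit factors of $2$, and reduce to an Aztec-rectangle-type graph enumerated by a known product formula---is the same as the paper's. But there is a genuine gap at the final, decisive step. The reduced graph is \emph{not} an Aztec rectangle with boundary dents, and the hyperfactorial ratio in the statement is \emph{not} ``the standard product enumeration of dented Aztec rectangles'' (that enumeration has the shape $2^{\binom{n+1}{2}}\prod_{i<j}(a_j-a_i)/(j-i)$ and does not match the quotient of factorials here). What the reductions actually produce is the connected sum of the dual graphs of two hexagons glued along a common row of vertices (the graph $H_{h_1+h_2,\,w-h_1,\,h_2,\,h_2}^{h_2+h_3,\,w-h_3}$ of Lemma \ref{massholelem}), which is then shown to be equivalent to an Aztec rectangle $AR_{h_1+2h_2+h_3-1,\,w+h_2}$ from which all vertices of an \emph{interior} row---the one lying $|h_1-h_3|\sqrt{2}/2$ below the central row---except the consecutive block in positions $h_2+1,\dots,w$ have been deleted. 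The enumeration needed for that object is Krattenthaler's theorem on holey Aztec rectangles (Lemma \ref{schur3}), a far less standard input than a dented Aztec rectangle; without identifying this target graph and invoking (or proving) that formula, your last step does not go through.

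A second, smaller but still substantive omission concerns the middle part. The middle layers have widths $b_1,\dots,b_t$ with $|b_i-b_{i+1}|=1$, and collapsing them requires the $AR_{p,q}\leftrightarrow OR_{p,q-1}$ replacements of Lemma \ref{OTrans} to eliminate convex and concave terms of the width sequence one at a time (together with the parity observation that $b_1=b_t=w$ forces $t$ odd for type-1 regions); urban renewal applied ``to the three parts in turn'' does not by itself glue the pieces across $\ell$ and $\ell'$ into a single planar Aztec-type graph. Relatedly, the argument must split into four cases according to the colors of the triangles adjacent to $\ell$ and $\ell'$, since these determine which direction of the $AR\leftrightarrow{}_|^|OR$ replacement applies. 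Your part (a) is essentially the paper's color-counting argument and is fine once the reductions are in place, but as written the proposal leaves the hardest bookkeeping---and the identification of the correct terminal graph and enumeration theorem---unresolved.
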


We notice that if we consider the case  when $\ell$ and $\ell'$ are superimposed in the definition of a quasi-octagon,
we get a region with six vertices that is exactly a symmetric quasi-hexagon defined in \cite{Tri}. We showed in \cite{Tri} that the number of tilings of a symmetric quasi-hexagon is a power of 2 times the number of tilings of a hexagon on the triangular lattice (see Theorem 2.2 in \cite{Tri}).

We will use a result of Kattenthaler \cite{Krat} (about the number of tilings of a certain family of Aztec rectangles with holes) and several new subgraph replacement rules to prove Theorem \ref{octagon1} (see Section 3).  As mentioned before, in general, the number of tilings of a quasi-octagon is \textit{not} given by a simple product formula. However,  we  can prove a sum formula for the number of tilings of the region in the general case (see Theorem \ref{octa3}).

\section{Preliminaries}

This paper shares several preliminary results and definitions with its prequel \cite{Tri}. If ones already read \cite{Tri}, they would like to move their attention quickly to Lemma \ref{masstransform2}.

A \textit{perfect matching} of a graph $G$ is a collection of edges such that each vertex of $G$ is adjacent to precisely one edge in the collection. A perfect matching is called \textit{dimmer covering} in statistical mechanics, and also \textit{1-factor} in graph theory. The number of perfect matchings of $G$ is denoted by $\M(G)$. More generally, if the edges of $G$ have weights on them, $\M(G)$ denotes the sum of the weights of all perfect matchings of $G$, where the weight of a perfect matching is the product of the weights on its constituent edges.

Given a lattice in the plane, a (lattice) \textit{region} is a finite connected union of fundamental regions of that lattice. A \textit{tile} is the union of any two fundamental regions sharing an edge. A \textit{tiling} of the region $R$ is a covering of $R$ by tiles with no gaps or overlaps. The tilings of a region $R$ can be naturally identified with the perfect matchings of its \textit{dual graph} (i.e., the graph whose vertices are the fundamental regions of $R$, and whose edges connect two fundamental regions precisely when they share an edge). In view of this, we denote by $\M(R)$ the number of tilings of $R$.

A \textit{forced edge} of a graph $G$ is an edge contained in every perfect matching of $G$. 
Removing forced edges and the vertices incident to them does not change the number of perfect matching of a graph\footnote{From now on, whenever we remove some forced edges, we remove also the vertices incident to them.}.

We present next three basic preliminary results stated below.

\begin{figure}\centering
\begin{picture}(0,0)%
\includegraphics{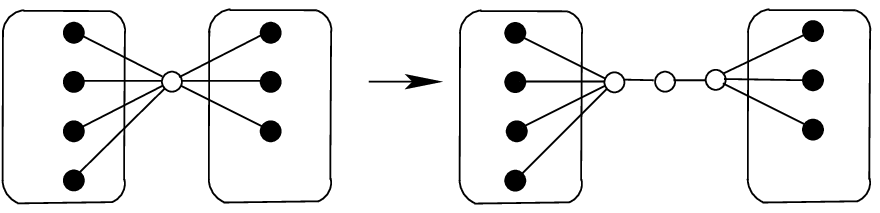}%
\end{picture}%
%
%
\setlength{\unitlength}{3947sp}%
\begingroup\makeatletter\ifx\SetFigFont\undefined%
\gdef\SetFigFont#1#2#3#4#5{%
  \reset@font\fontsize{#1}{#2pt}%
  \fontfamily{#3}\fontseries{#4}\fontshape{#5}%
  \selectfont}%
\fi\endgroup%
\begin{picture}(4188,1361)(593,-556)
\put(1336,591){\makebox(0,0)[lb]{\smash{{\SetFigFont{10}{12.0}{\familydefault}{\mddefault}{\updefault}{$v$}%
}}}}
\put(3549,621){\makebox(0,0)[lb]{\smash{{\SetFigFont{10}{12.0}{\familydefault}{\mddefault}{\updefault}{$v'$}%
}}}}
\put(3757, 89){\makebox(0,0)[lb]{\smash{{\SetFigFont{10}{12.0}{\familydefault}{\mddefault}{\updefault}{$x$}%
}}}}
\put(3999,621){\makebox(0,0)[lb]{\smash{{\SetFigFont{10}{12.0}{\familydefault}{\mddefault}{\updefault}{$v''$}%
}}}}
\put(820,-541){\makebox(0,0)[lb]{\smash{{\SetFigFont{10}{12.0}{\familydefault}{\mddefault}{\updefault}{$H$}%
}}}}
\put(1840,-535){\makebox(0,0)[lb]{\smash{{\SetFigFont{10}{12.0}{\familydefault}{\mddefault}{\updefault}{$K$}%
}}}}
\put(3031,-535){\makebox(0,0)[lb]{\smash{{\SetFigFont{10}{12.0}{\familydefault}{\mddefault}{\updefault}{$H$}%
}}}}
\put(4426,-484){\makebox(0,0)[lb]{\smash{{\SetFigFont{10}{12.0}{\familydefault}{\mddefault}{\updefault}{$K$}%
}}}}
\end{picture}%
\caption{Vertex splitting.}
\label{vertexsplitting}
\end{figure}

\begin{lem} [Vertex-Splitting Lemma,  Lemma 2.2 in \cite{Ciucu5}]\label{VS}
 Let $G$ be a graph, $v$ be a vertex of it, and denote the set of neighbors of $v$ by $N(v)$.
  For any disjoint union $N(v)=H\cup K$, let $G'$ be the graph obtained from $G\setminus v$ by including three new vertices $v'$, $v''$ and $x$ so that $N(v')=H\cup \{x\}$, $N(v'')=K\cup\{x\}$, and $N(x)=\{v',v''\}$ (see Figure \ref{vertexsplitting}). Then $\M(G)=\M(G')$.
\end{lem}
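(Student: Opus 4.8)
The plan is to exhibit a weight-preserving bijection between the perfect matchings of $G$ and those of $G'$. The crucial structural observation is that in $G'$ the new vertex $x$ has degree exactly two, with $N(x)=\{v',v''\}$; hence in every perfect matching of $G'$ the vertex $x$ is matched either to $v'$ or to $v''$, and these two possibilities are mutually exclusive. This immediately splits the perfect matchings of $G'$ into two classes, which I will match up with the two ways the single vertex $v$ can be covered in $G$, namely by an edge into $H$ or by an edge into $K$.

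First I would describe the forward map. Let $M$ be a perfect matching of $G$. Since $v$ must be covered, $M$ contains exactly one edge at $v$, say $vu$ with $u\in N(v)=H\cup K$. If $u\in H$, replace $vu$ by the two edges $v'u$ and $v''x$; if $u\in K$, replace $vu$ by $v''u$ and $v'x$. In either case leave every other edge of $M$ untouched, noting that every such edge lies in $G\setminus v$, which is a common subgraph of $G$ and $G'$. One checks directly that the resulting edge set covers each of $v',v'',x$ exactly once and leaves the coverage of all remaining vertices unchanged, so it is indeed a perfect matching of $G'$.

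Next I would describe the inverse map and verify the two maps are mutually inverse. Let $M'$ be a perfect matching of $G'$. By the structural observation, exactly one of the edges $v'x$, $v''x$ lies in $M'$. If $v''x\in M'$, then $v'$ is matched by $M'$ to some $u\in H$ (the only remaining option once $x$ is taken); replace the pair $\{v'u,\,v''x\}$ by the single edge $vu$. Symmetrically, if $v'x\in M'$, then $v''$ is matched to some $u\in K$, and I replace $\{v''u,\,v'x\}$ by $vu$. This produces a perfect matching of $G$, and composing the two maps in either order returns the original matching, so the correspondence is a bijection.

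Finally, to obtain the weighted identity $\M(G)=\M(G')$ it suffices to assign the edges $v'x$ and $v''x$ weight $1$ and to let each edge $v'h$ (for $h\in H$) and $v''k$ (for $k\in K$) inherit the weight of the corresponding edge $vh$ or $vk$ of $G$. Under the bijection, an edge $vu$ of weight $\omega$ is traded for an edge of the same weight $\omega$ together with a weight-$1$ edge at $x$, while all other edges keep their weights; hence corresponding matchings have equal weight, and summing over the bijection yields the claim. The only point requiring care is the bookkeeping at $x$: one must confirm that the two cases are genuinely exhaustive and disjoint, which is precisely the degree-two property of $x$, and this is what makes both maps well defined.
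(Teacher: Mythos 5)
Your proof is correct: the degree-two observation at $x$ cleanly splits the matchings of $G'$ into the two classes corresponding to whether $v$ is matched into $H$ or into $K$, the two maps are well defined and mutually inverse, and the weight bookkeeping (new edges at $x$ carry weight $1$, edges at $v'$ and $v''$ inherit the weights of the corresponding edges at $v$) is exactly the convention the figure indicates. Note that the paper itself offers no proof of this lemma --- it is quoted as Lemma 2.2 of the cited Ciucu lecture notes --- so there is nothing in the paper to compare against; the bijection you give is the standard argument for this result.
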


\begin{lem}[Star Lemma,  Lemma 3.2 in \cite{Tri}]\label{star}
Let $G$ be a weighted graph, and let $v$ be a vertex of~$G$. Let $G'$ be the graph obtained from $G$ by multiplying the weights of all edges that are adjacent to $v$ by $t>0$. Then $\M(G')=t\M(G)$.
\end{lem}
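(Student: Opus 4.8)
The plan is to exploit the fact that reweighting the edges at a single vertex leaves the underlying graph untouched, and that every perfect matching is forced to meet $v$ in precisely one edge. First I would observe that $G$ and $G'$ have the same vertex set and the same edge set; they differ only in the weight function. Consequently $G$ and $G'$ possess exactly the same collection of perfect matchings, so I may compare the two weighted counts matching-by-matching under the identity correspondence, rather than having to construct a nontrivial bijection.

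Next, I would fix an arbitrary perfect matching $M$ of $G$. Since $M$ covers the vertex $v$ exactly once, it contains precisely one edge $e$ incident to $v$. Every other edge of $M$ avoids $v$ and hence carries the same weight in $G$ and in $G'$, whereas the weight of $e$ is multiplied by $t$ in passing from $G$ to $G'$. Because the weight of $M$ is by definition the product of the weights of its constituent edges, exactly one factor in this product is scaled by $t$, and therefore $\wt_{G'}(M)=t\cdot\wt_{G}(M)$.

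Finally, I would sum this identity over all perfect matchings $M$ of the common underlying graph. By the definition of the weighted matching count, $\M(G')=\sum_{M}\wt_{G'}(M)=t\sum_{M}\wt_{G}(M)=t\,\M(G)$, which is exactly the assertion of the lemma.

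I do not anticipate any genuine obstacle here, since the entire argument rests on the single elementary observation that a perfect matching meets $v$ in exactly one edge, which is immediate from the definition. The only points meriting a brief remark are the degenerate cases: if $G$ admits no perfect matching then both sides vanish, and if $v$ happens to be isolated then there are no edges at $v$ to reweight, so $G'=G$ and again $\M(G')=\M(G)=t\cdot 0=0$; both are covered automatically by the computation above.
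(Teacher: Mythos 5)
Your proof is correct and is exactly the standard argument: the paper itself quotes this lemma from \cite{Tri} without reproving it, and the intended justification there is the same observation you make, namely that every perfect matching contains precisely one edge incident to $v$, so each matching's weight — and hence the sum $\M(G)$ — is scaled by exactly one factor of $t$. Nothing further is needed.
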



The following result is a generalization due to Propp of the ``urban renewal" trick first observed by Kuperberg. 

\begin{figure}\centering
\begin{picture}(0,0)%
\includegraphics{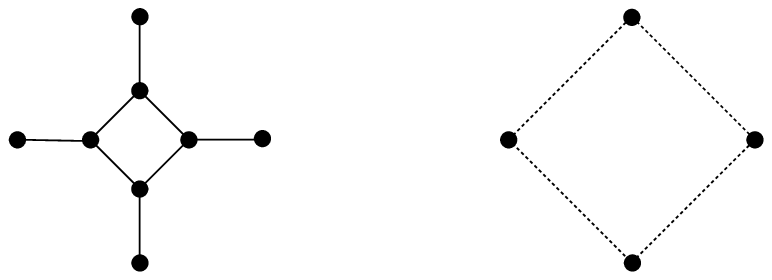}%
\end{picture}%
\setlength{\unitlength}{3947sp}%
\begingroup\makeatletter\ifx\SetFigFont\undefined%
\gdef\SetFigFont#1#2#3#4#5{%
  \reset@font\fontsize{#1}{#2}%
  \fontfamily{#3}\fontseries{#4}\fontshape{#5}%
  \selectfont}%
\fi\endgroup%
\begin{picture}(4054,1735)(340,-948)
\put(355,-156){\makebox(0,0)[lb]{\smash{{\SetFigFont{10}{12.0}{\familydefault}{\mddefault}{\updefault}{$A$}%
}}}}
\put(1064,-933){\makebox(0,0)[lb]{\smash{{\SetFigFont{10}{12.0}{\familydefault}{\mddefault}{\updefault}{$B$}%
}}}}
\put(1891,-106){\makebox(0,0)[lb]{\smash{{\SetFigFont{10}{12.0}{\familydefault}{\mddefault}{\updefault}{$C$}%
}}}}
\put(1182,603){\makebox(0,0)[lb]{\smash{{\SetFigFont{10}{12.0}{\familydefault}{\mddefault}{\updefault}{$D$}%
}}}}
\put(2717,-189){\makebox(0,0)[lb]{\smash{{\SetFigFont{10}{12.0}{\familydefault}{\mddefault}{\updefault}{$A$}%
}}}}
\put(3426,-933){\makebox(0,0)[lb]{\smash{{\SetFigFont{10}{12.0}{\familydefault}{\mddefault}{\updefault}{$B$}%
}}}}
\put(4253,-106){\makebox(0,0)[lb]{\smash{{\SetFigFont{10}{12.0}{\familydefault}{\mddefault}{\updefault}{$C$}%
}}}}
\put(3426,603){\makebox(0,0)[lb]{\smash{{\SetFigFont{10}{12.0}{\familydefault}{\mddefault}{\updefault}{$D$}%
}}}}
\put(904,-382){\makebox(0,0)[lb]{\smash{{\SetFigFont{10}{12.0}{\familydefault}{\mddefault}{\updefault}{$x$}%
}}}}
\put(1396,-388){\makebox(0,0)[lb]{\smash{{\SetFigFont{10}{12.0}{\familydefault}{\mddefault}{\updefault}{$y$}%
}}}}
\put(1418,130){\makebox(0,0)[lb]{\smash{{\SetFigFont{10}{12.0}{\familydefault}{\mddefault}{\updefault}{$z$}%
}}}}
\put(946,130){\makebox(0,0)[lb]{\smash{{\SetFigFont{10}{12.0}{\familydefault}{\mddefault}{\updefault}{$t$}%
}}}}
\put(2968,284){\makebox(0,0)[lb]{\smash{{\SetFigFont{10}{12.0}{\familydefault}{\mddefault}{\updefault}{$y/\Delta$}%
}}}}
\put(3934,311){\makebox(0,0)[lb]{\smash{{\SetFigFont{10}{12.0}{\familydefault}{\mddefault}{\updefault}{$x/\Delta$}%
}}}}
\put(3964,-544){\makebox(0,0)[lb]{\smash{{\SetFigFont{10}{12.0}{\familydefault}{\mddefault}{\updefault}{$t/\Delta$}%
}}}}
\put(2965,-526){\makebox(0,0)[lb]{\smash{{\SetFigFont{10}{12.0}{\familydefault}{\mddefault}{\updefault}{$z/\Delta$}%
}}}}
\put(2197,-817){\makebox(0,0)[lb]{\smash{{\SetFigFont{10}{12.0}{\familydefault}{\mddefault}{\updefault}{$\Delta= xz+yt$}%
}}}}
\end{picture}%
\caption{Urban renewal.}
\label{spider1}
\end{figure}

\begin{lem} [Spider Lemma]\label{spider}
 Let $G$ be a weighted graph containing the subgraph $K$ shown on the left in Figure \ref{spider1} (the labels indicate weights, unlabeled edges have weight 1). Suppose in addition that the four inner black vertices in the subgraph $K$, different from $A,B,C,D$, have no neighbors outside $K$. Let $G'$ be the graph obtained from $G$ by replacing $K$ by the graph $\overline{K}$ shown on right in Figure \ref{spider}, where the dashed lines indicate new edges, weighted as shown. Then $\M(G)=(xz+yt)\M(G')$.
%
%
%
%
\end{lem}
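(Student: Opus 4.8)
The plan is to prove this by a purely local computation. Since replacing $K$ by $\overline{K}$ changes nothing outside $K$ and, by hypothesis, the four inner black vertices have no neighbors outside $K$, I would organize the perfect matchings of $G$ according to how the four distinguished vertices $A,B,C,D$ are used, and show that the weighted count of ways to complete a matching \emph{inside} $K$ is exactly $\Delta:=xz+yt$ times the corresponding count for $\overline{K}$.

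First I would fix a perfect matching $M$ of $G$ and split its edges into those lying in $K$ and those lying in the rest of the graph $H:=G\setminus(\text{interior of }K)$, so that $A,B,C,D$ are the only vertices shared by $K$ and $H$. The part $M\cap K$ must cover all four inner vertices, and it covers some subset $T\subseteq\{A,B,C,D\}$; the complementary set $S:=\{A,B,C,D\}\setminus T$ is then necessarily matched by edges of $H$. Grouping matchings of $G$ by $S$ gives $\M(G)=\sum_S w_K(S)\,N(S)$, where $w_K(S)$ is the total weight of matchings of $K$ covering all inner vertices together with exactly the outer vertices not in $S$, and $N(S)$ is the total weight of matchings of $H$ covering all of its vertices except the outer vertices in $S$. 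The identical bookkeeping applied to $G'$ yields $\M(G')=\sum_S w_{\overline K}(S)\,N(S)$ with the \emph{same} factors $N(S)$, since $H$ is untouched by the replacement.

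The heart of the argument is then the finite identity $w_K(S)=\Delta\,w_{\overline K}(S)$ for every $S\subseteq\{A,B,C,D\}$. I would verify it by enumerating the few local configurations. On the $K$ side, each inner vertex is matched either along the central $4$-cycle (whose opposite edges carry the weights $x,z$ and $y,t$, so that its two $4$-cycle matchings contribute $xz$ and $yt$) or along its spoke to the adjacent outer vertex. A short case analysis gives $w_K(\emptyset)=1$, $w_K(\{A,B,C,D\})=xz+yt=\Delta$, the four ``adjacent-pair'' patterns contributing the single weights $x,y,z,t$, and every pattern with $|S|$ odd or with $S$ an ``opposite pair'' contributing $0$ for parity or adjacency reasons. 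On the $\overline K$ side, $\overline{K}$ is a weighted $4$-cycle on $A,B,C,D$ alone (edges $y/\Delta,x/\Delta,t/\Delta,z/\Delta$ in the display on the right of Figure \ref{spider1}), and the analogous enumeration produces exactly these same values after division by $\Delta$: the empty matching pattern gives $1$, the full matching gives $(xz+yt)/\Delta^2=1/\Delta$, and the adjacent-pair patterns give $x/\Delta,y/\Delta,z/\Delta,t/\Delta$. Matching the two lists term by term establishes $w_K(S)=\Delta\,w_{\overline K}(S)$.

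Finally I would substitute this identity into the two sums, obtaining $\M(G)=\sum_S w_K(S)N(S)=\Delta\sum_S w_{\overline K}(S)N(S)=\Delta\,\M(G')=(xz+yt)\,\M(G')$. I expect the only delicate point to be the correct pairing of each internal configuration with the subset $S$ it forces to be matched outside $K$, and the verification that the odd-parity and opposite-pair patterns vanish on \emph{both} sides, so that the term-by-term comparison is genuinely exhaustive; once that matching of cases is pinned down, the remainder is a routine finite check.
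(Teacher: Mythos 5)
The paper does not actually prove this lemma: it is imported as a known result (Propp's generalization of Kuperberg's ``urban renewal''), so there is no internal proof to compare yours against. That said, your argument is correct and is essentially the standard proof of urban renewal. The decomposition $\M(G)=\sum_S w_K(S)N(S)$, the observation that the factors $N(S)$ are unchanged by the replacement because the inner vertices have no neighbors outside $K$, and the term-by-term verification of $w_K(S)=(xz+yt)\,w_{\overline K}(S)$ over the even-size subsets $S$ (with odd $|S|$ killed by parity and opposite pairs killed by non-adjacency on both sides) is exactly how this is established in the literature. The one point you rightly flag as delicate is the adjacent-pair case: for $S=\{A,B\}$, say, the surviving internal configuration uses the inner edge joining the neighbors of $A$ and $B$, while on the $\overline K$ side the surviving edge is $CD$; the identity holds precisely because the weights in Figure \ref{spider1} are placed so that each new edge carries $1/\Delta$ times the weight of the \emph{opposite} old inner edge (e.g.\ the new $CD$ edge has weight $x/\Delta$ where $x$ is the inner edge nearest $A$ and $B$). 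Once that correspondence is checked against the figure, the proof is complete.
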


Consider a $(2m+1)\times(2n+1)$ rectangular chessboard and suppose the corners are black. The \textit{Aztec rectangle (graph)} $AR_{m,n}$ is the graph whose vertices are the white unit squares and whose edges connect precisely those pairs of white unit squares that are diagonally adjacent. The \textit{odd Aztec rectangle} $OR_{m,n}$ is the graph whose vertices are the black unit squares whose edges connect precisely those pairs of black unit squares that are diagonally adjacent. Figures \ref{twoAR}(a) and (b) illustrate an example of $AR_{3,5}$ and $OR_{3,5}$ respectively. If one removes the bottom row of the board $B$ and applies the same procedure the definition of $AR_{m,n}$, the resulting graph is denoted by $AR_{m-\frac12,n}$, and called a \textit{baseless Aztec rectangle} (see the graph on the right in Figure \ref{elem2} for an example with $m=3$ and $n=4$).

\begin{figure}\centering
\includegraphics[width=8cm]{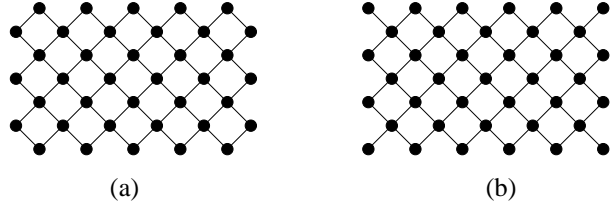}
\caption{Two kinds of Aztec rectangles.}
\label{twoAR}
\end{figure}

An \textit{induced subgraph} of a graph $G$ is a graph when vertex set is a subset $U$ of the vertex set of $G$, and whose edge set consists of all edges of $G$ with endpoints in $U$. The following lemma was proved in \cite{Tri}.

\begin{lem}[Graph Splitting Lemma, Lemma 3.6 in \cite{Tri}]\label{graphsplitting}
Let $G$ be a bipartite graph, and let $V_1$ and $V_2$ be the two vertex classes. Let $H$ be an induced subgraph of $G$.

(a) Assume that $H$ satisfies the following conditions.
\begin{enumerate}
\item[(i)] The \textit{separating condition}: there are no edges of $G$ connecting a vertex in $V(H)\cap V_1$ and a vertex in $V(G-H)$,

\item[(ii)] The\textit{ balancing condition}: $|V(H)\cap V_1|=|V(H)\cap V_2|$.
\end{enumerate}
Then
\begin{equation}\label{GS}
\M(G)=\M(H)\, \M(G-H).
\end{equation}

(b) If $H$ satisfy the separating condition and but has $|V(H)\cap V_1|>|V(H)\cap V_2|$, then $\M(G)=0$.
\end{lem}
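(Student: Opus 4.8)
The plan is to argue directly about perfect matchings, exploiting the bipartite structure together with the separating condition. The central observation is that, because $G$ is bipartite and $H$ is induced, every vertex of $V(H)\cap V_1$ has all of its neighbors among the $V_2$-vertices, and by the separating condition none of those neighbors lie in $G-H$; hence every neighbor of such a vertex lies in $V(H)\cap V_2$. Consequently, in any perfect matching $M$ of $G$, each vertex of $V(H)\cap V_1$ is forced to be matched to a vertex of $V(H)\cap V_2$. I would record this as the key lemma from which both parts follow.

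For part (b) this observation settles the claim immediately: the $|V(H)\cap V_1|$ vertices of $V(H)\cap V_1$ must be matched injectively into $V(H)\cap V_2$, which is impossible when $|V(H)\cap V_1|>|V(H)\cap V_2|$ by the pigeonhole principle. Therefore $G$ admits no perfect matching and $\M(G)=0$.

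For part (a), under the balancing condition the injection of $V(H)\cap V_1$ into $V(H)\cap V_2$ induced by $M$ must in fact be a bijection, since the two sets have the same cardinality; hence every vertex of $V(H)\cap V_2$ is also matched inside $H$. It follows that no edge of $M$ joins $V(H)$ to $V(G-H)$: any such edge would have to use a $V(H)\cap V_2$ vertex on the $H$-side (the $V_1$-side being excluded by the separating condition), but all of those are already matched within $H$. This forces the decomposition $M=M_H\sqcup M_{G-H}$, where $M_H$ is a perfect matching of $H$ and $M_{G-H}$ a perfect matching of $G-H$; here I use that both $H$ and $G-H$ are induced, so the edges of $M$ lying entirely inside each piece are genuinely edges of that piece.

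Finally I would set up the bijection explicitly: the assignment $M\mapsto(M_H,M_{G-H})$ maps perfect matchings of $G$ onto pairs of perfect matchings of $H$ and $G-H$, with inverse $(M_H,M_{G-H})\mapsto M_H\cup M_{G-H}$, which is a perfect matching of $G$ because $V(G)=V(H)\sqcup V(G-H)$. Since the weight of a matching is the product of its edge weights, this correspondence is weight-multiplicative, and summing over all matchings yields $\M(G)=\M(H)\,\M(G-H)$. I expect the only delicate point to be the verification that no matching edge crosses between $H$ and $G-H$; this crossing-edge step is precisely where both hypotheses are genuinely used, the separating condition excluding crossings on the $V_1$ side and the balancing/counting argument excluding them on the $V_2$ side. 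Everything else is routine bookkeeping.
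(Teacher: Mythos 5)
Your proof is correct and complete: the chain ``separating condition $+$ bipartiteness forces $V(H)\cap V_1$ to be matched into $V(H)\cap V_2$, the balancing condition upgrades this to a bijection, hence no matching edge crosses between $H$ and $G-H$'' is exactly the standard argument for this lemma, which the present paper only quotes from its prequel (Lemma 3.6 in \cite{Tri}) without reproving. The one point worth stating explicitly, which you do handle, is that $G-H$ is the induced subgraph on $V(G)\setminus V(H)$, so the non-crossing edges of a perfect matching of $G$ really are edges of $H$ and of $G-H$ respectively.
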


 Let $\mathcal{D}:=D_a(d_1,\dotsc,d_k)$ be the portion of $\mathcal{O}_{a}(d_1,\dotsc,d_k;\ \overline{d}_1,\dotsc, \overline{d}_t;\ d'_1,\dotsc,d'_l)$ that is above the diagonal $\ell$, we call it a \textit{generalized Douglas region} (see \cite{Tri} for more details). The \textit{height} and the \textit{width} of $\mathcal{D}$ are defined to be to the upper height and the upper width of the quasi-octagon, denoted by $h(\mathcal{D})$ and $w(\mathcal{D})$, respectively. %
We denote by $G_a(d_1,\dotsc,d_k)$ the dual graph of $D_a(d_1,\dotsc,d_k)$.

The \textit{connected sum} $G\#G'$ of two disjoint graphs $G$ and $G'$ along the ordered sets of vertices $\{v_1,\dotsc,v_n\}\subset V(G)$ and $\{v'_1,\dotsc,v'_n\}\subset V(G')$ is the graph obtained from $G$ and $G'$ by identifying vertices $v_i$ and $v'_i$, for $i=1,\dotsc,n$. We have the following variant of Proposition 4.1 in \cite{Tri}.

\begin{figure}\centering
\includegraphics[width=10cm]{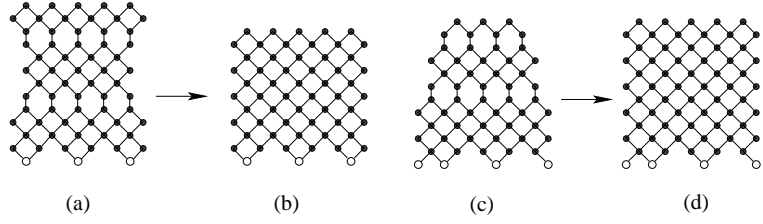}
\caption{Illustrating the transformations in Lemma \ref{masstransform2}.}
\label{massvariant}
\end{figure}

\begin{lem}[Composite Transformations]\label{masstransform2}
Assume $a$, $d_1,d_2,\dotsc,d_k$ are positive integers, so that $\mathcal{D}:=D_{a}(d_1,\dots,d_k)$ is a generalized Douglas region  having the height $h$ and the with $w$. Assume in addition that $G$ is a graph, and that  $\{v_1,v_2,\dotsc,v_{w-m}\}$ is an ordered subset of the vertex set of $G$, for some $0\leq m<w$.

 (a) If the bottom row of cells in $\mathcal{D}$ is white, then
\begin{equation}\label{composeq1}
\M(\overline{G}_a(d_1,\dotsc,d_k)\#G)=2^{\mathcal{C}-h(w+1)}\M(\overline{AR}_{h,w}\#G),
\end{equation}
where $\overline{G}_a(d_1,\dotsc,d_k)$ and $\overline{AR}_{h,w}$ are the graphs obtained from $G_a(d_1,\dotsc,d_k)$ and $AR_{h,w}$ by removing the $r_1$-st, the $r_2$-nd, $\dotsc,$ and the $r_{m}$-th vertices in their bottoms, respectively (if $m=0$, then we do not remove any vertices from the bottom of the graphs);  and where the connected sum acts on $G$ along $\{v_1,v_2,\dotsc,v_{w-m}\}$ and on $\overline{G}_a(d_1,\dotsc,d_k)$ and $\overline{AR}_{h,w}$ along their bottom vertices ordered from left to rights. See the illustration of this transformation in Figures \ref{massvariant} (a) and (b).

 (b) If the bottom row of cells in $D$ is black, then
\begin{equation}\label{composeq2}
\M(\overline{G}_a(d_1,\dotsc,d_k)\#G)=2^{\mathcal{C}-hw}\M(\overline{AR}_{h-\frac{1}{2},w-1}\#G),
\end{equation}
where $\overline{G}_a(d_1,\dotsc,d_k)$ and $\overline{AR}_{h-\frac{1}{2},w-1}$ are the graphs obtained from $G_a(d_1,\dotsc,d_k)$ and $AR_{h-\frac{1}{2},w-1}$ by removing the $r_1$-st, the $r_2$-nd, $\dotsc,$ and the $r_{m}$-th vertices in their bottoms, respectively; and where the connected sum acts on $G$ along the vertex set  $\{v_1,v_2,\dotsc,v_{w-m}\}$,  and  on $\overline{G}_a(d_1,\dotsc,d_k)$ and $\overline{AR}_{h-\frac{1}{2},w-1}$ along their bottom vertices ordered from left to right (see Figures \ref{massvariant} (c) and (d)).
\end{lem}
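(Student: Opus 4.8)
The plan is to adapt the proof of Proposition 4.1 in \cite{Tri} to the present setting, exploiting the fact that every subgraph replacement used there is local to the interior and top of the generalized Douglas region and never touches its bottom row of vertices. I would first regard $\overline{G}_a(d_1,\dotsc,d_k)\#G$ as a single weighted graph and note that the vertices along which the connected sum is taken are precisely the (surviving) bottom vertices of $\overline{G}_a(d_1,\dotsc,d_k)$, which lie strictly below the portion on which the moves will act. Consequently the whole sequence of transformations reduces only the $\overline{G}_a$ part, leaving both $G$ and the identification of vertices completely untouched. It therefore suffices to show that $\overline{G}_a(d_1,\dotsc,d_k)$ collapses to $\overline{AR}_{h,w}$ (respectively $\overline{AR}_{h-\frac12,w-1}$) while fixing its bottom row, after which the connected sum with $G$ is simply carried along, and to keep account of the accumulated multiplicative factor.

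For the reduction itself I would process the strips cut out by the diagonals $d_1,\dotsc,d_k$ from top to bottom. In each strip the square cells correspond to $4$-cycles to which the Spider Lemma (Lemma \ref{spider}) applies; since the relevant edges carry weight $1$, each such urban renewal contributes a factor $\Delta=xz+yt=2$ and replaces the cell by the crossed configuration out of which the Aztec rectangle is built. The triangular cells meeting along a diagonal between consecutive strips are resolved using the Vertex-Splitting Lemma (Lemma \ref{VS}), and the Star Lemma (Lemma \ref{star}) is used to clear the fractional weights produced by successive urban renewals so that the next round of moves remains applicable. Once all square cells have been treated, a family of forced edges appears along the tapered top boundary; removing these together with their endpoints collapses the graph to the desired Aztec rectangle sharing the same bottom row.

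The power-of-$2$ bookkeeping is where the two cases diverge and where most of the work lies. The net exponent is the number of urban-renewal applications together with the weight rescalings forced by the Star Lemma, and I would show this total equals the quantity $\mathcal{C}$ (the number of black regular cells in $\mathcal{D}$) minus a boundary correction. When the bottom row of cells is white, the full base of the Aztec rectangle survives and the correction is $h(w+1)$, giving $2^{\mathcal{C}-h(w+1)}$ as in \eqref{composeq1}. When the bottom row is black, the parity at $\ell$ produces one extra row of forced edges whose removal simultaneously deletes the base—turning $AR_{h,w}$ into the baseless $AR_{h-\frac12,w-1}$—and changes the correction to $hw$, yielding $2^{\mathcal{C}-hw}$ as in \eqref{composeq2}.

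The hard part will be verifying the exact power-of-$2$ count and, in particular, checking that the deletions at positions $r_1,\dotsc,r_m$ propagate cleanly through the moves to become precisely the prescribed bottom-vertex deletions in $\overline{AR}_{h,w}$ (respectively $\overline{AR}_{h-\frac12,w-1}$) without altering the factor. This rests on confirming the localization claim rigorously, and on checking, wherever the Graph-Splitting Lemma (Lemma \ref{graphsplitting}) is invoked to peel off the top boundary, that its separating and balancing conditions are satisfied; the balancing condition is exactly what enforces the constraint relating $h$ and $w$. Once these points are settled, the identity follows by multiplying together the factors supplied by Lemmas \ref{VS}, \ref{star}, and \ref{spider}.
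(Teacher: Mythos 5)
Your proposal is correct and matches the paper's approach: the paper itself gives no written proof, stating only that the argument is identical to that of Proposition~4.1 in \cite{Tri}, and the key justification for that claim is exactly the localization observation you make --- all the replacements (vertex-splitting, urban renewal, star rescaling, forced-edge removal) act away from the bottom row, so the deleted positions $r_1,\dotsc,r_m$ and the connected sum with $G$ are carried along unchanged. Your sketch of the strip-by-strip reduction and the $2^{\mathcal{C}-h(w+1)}$ versus $2^{\mathcal{C}-hw}$ bookkeeping is the same computation as in the prequel, so no new idea is needed beyond what you have written.
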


Since the proofs of Proposition 4.1 in \cite{Tri} and Lemma \ref{masstransform2} are identical, the proof of Lemma \ref{masstransform2} is omitted. Moreover, we get  Proposition 4.1 in \cite{Tri} from Lemma \ref{masstransform2} by specializing $m=0$.

The following lemma is a special case of the Lemma \ref{masstransform2}.

\begin{figure}\centering
\begin{picture}(0,0)%
\includegraphics{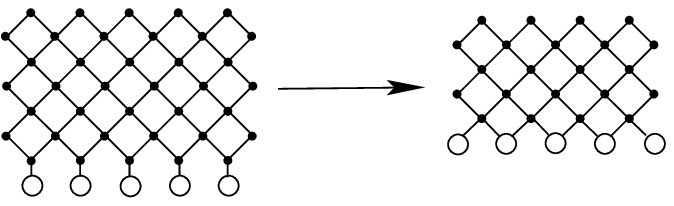}%
\end{picture}
\setlength{\unitlength}{3947sp}%
\begingroup\makeatletter\ifx\SetFigFont\undefined%
\gdef\SetFigFont#1#2#3#4#5{%
  \reset@font\fontsize{#1}{#2pt}%
  \fontfamily{#3}\fontseries{#4}\fontshape{#5}%
  \selectfont}%
\fi\endgroup%
\begin{picture}(3276,1091)(2935,-582)
\put(6004,-403){\makebox(0,0)[lb]{\smash{{\SetFigFont{9}{10.8}{\rmdefault}{\mddefault}{\updefault}{$v'_5$}%
}}}}
\put(5749,-406){\makebox(0,0)[lb]{\smash{{\SetFigFont{9}{10.8}{\rmdefault}{\mddefault}{\updefault}{$v'_4$}%
}}}}
\put(5518,-400){\makebox(0,0)[lb]{\smash{{\SetFigFont{9}{10.8}{\rmdefault}{\mddefault}{\updefault}{$v'_3$}%
}}}}
\put(5278,-397){\makebox(0,0)[lb]{\smash{{\SetFigFont{9}{10.8}{\rmdefault}{\mddefault}{\updefault}{$v'_2$}%
}}}}
\put(5032,-400){\makebox(0,0)[lb]{\smash{{\SetFigFont{9}{10.8}{\rmdefault}{\mddefault}{\updefault}{$v'_1$}%
}}}}
\put(3989,-564){\makebox(0,0)[lb]{\smash{{\SetFigFont{9}{10.8}{\rmdefault}{\mddefault}{\updefault}{$v'_5$}%
}}}}
\put(3734,-567){\makebox(0,0)[lb]{\smash{{\SetFigFont{9}{10.8}{\rmdefault}{\mddefault}{\updefault}{$v'_4$}%
}}}}
\put(3503,-561){\makebox(0,0)[lb]{\smash{{\SetFigFont{9}{10.8}{\rmdefault}{\mddefault}{\updefault}{$v'_3$}%
}}}}
\put(3263,-558){\makebox(0,0)[lb]{\smash{{\SetFigFont{9}{10.8}{\rmdefault}{\mddefault}{\updefault}{$v'_2$}%
}}}}
\put(3017,-561){\makebox(0,0)[lb]{\smash{{\SetFigFont{9}{10.8}{\rmdefault}{\mddefault}{\updefault}{$v'_1$}%
}}}}
\end{picture}
\caption{Illustrating the transformation in Lemma \ref{T1}.}
\label{elem2}
\end{figure}

\begin{lem}[Lemma 3.4 in \cite{Tri}]\label{T1}
Let $G$ be a graph and let  $\{v_1,\dotsc,v_q\}$ be an ordered subset of its vertices. Then
\begin{equation}\label{T1eq1}
\M({}_|AR_{p,q}\#G)=2^p\M(AR_{p-\frac12,q-1}\#G),
\end{equation}
where ${}_|AR_{p,q}$ is the graph obtained from $AR_{p,q}$ by appending $q$ vertical edges to its bottommost vertices; and  where the connected sum acts on $G$ along $\{v_1,\dotsc,v_q\}$, and on ${}_|AR_{p,q}$ and $AR_{p-\frac12,q}$ along their $q$ bottommost vertices ordered from left to right. The transformation is illustrated in Figure \ref{elem2}.
\end{lem}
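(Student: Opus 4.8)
The plan is to transform the graph ${}_|AR_{p,q}\#G$ into $AR_{p-\frac12,q-1}\#G$ by a sequence of the local replacements supplied by Lemmas \ref{VS}, \ref{star} and \ref{spider}, interspersed with forced-edge removals, and to show that the weights accumulated along the way multiply out to exactly $2^{p}$. First I would fix coordinates at the bottom of the Aztec rectangle: write $b_1,\dots,b_q$ for the bottommost white vertices of $AR_{p,q}$ (each carrying an appended vertical edge down to the vertex $v_k$ that is glued to $G$), $c_0,\dots,c_q$ for the vertices of the row immediately above, and $a_1,\dots,a_q$ for the next row up, so that each of $a_k$ and $b_k$ is joined to $c_{k-1}$ and $c_k$, while $b_k$ is additionally joined to the pendant $v_k$. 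In these coordinates the target $AR_{p-\frac12,q-1}$ is exactly what remains of $AR_{p,q}$ after the bottom board-row and the two extreme columns have been stripped off, so the whole content of the lemma is that this stripping costs precisely the factor $2^{p}$.

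\textbf{The factor $2^{p}$.} The square faces of the rectangle are the $4$-cycles $(a_k,c_{k-1},b_k,c_k)$ and their translates, each surrounding a black cell. The key observation is that a maximal \emph{diagonal} family of such faces, say the $p$ faces centred at the black cells $(2p-1,1),(2p-3,3),\dots,(1,2p-1)$, is pairwise vertex-disjoint, since consecutive members differ by $2$ in both coordinates. Because the inner vertices of an interior face still have neighbors outside it, I would first apply the Vertex-Splitting Lemma \ref{VS} to detach each of these $p$ faces into an isolated spider whose outer legs run up toward the $a$-row and down toward the pendants, and then apply the Spider Lemma \ref{spider} to each. In the unweighted case the urban-renewal factor is $xz+yt=1\cdot 1+1\cdot 1=2$, and since the $p$ faces are disjoint these contributions are independent, giving the total $2^{p}$. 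The pendant edges $b_kv_k$ are essential here: they furnish the "leg" that turns the lowest face into a legitimate spider.

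\textbf{Cleanup.} After the urban renewals the new short-cut edges carry weight $1/\Delta=1/2$; I would restore unit weights with the Star Lemma \ref{star}, checking that the Star factors combine with the $\Delta$-factors to leave the net power $2^{p}$ unchanged. The low degree of the boundary vertices (the end vertices $c_0,c_q$ and the extreme $b$'s and $a$'s) then forces a cascade of edges, and removing these forced edges accounts for the disappearance of one board-row and of the two extreme columns, leaving the baseless shape. A vertex count confirms the arithmetic: the $p$ urban renewals delete $4p$ vertices and the forced-edge cascade deletes $2(q-p)$ more, for the required total of $2p+2q$. When $q<p$ the two vertex classes of $AR_{p-\frac12,q-1}\#G$ have unequal sizes and both members of \eqref{T1eq1} vanish by the Graph-Splitting Lemma \ref{graphsplitting}(b), so it suffices to treat the balanced case $q\ge p$. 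As a sanity check I would verify $p=1$ directly by a fan computation: the subgraph joining $a_1,\dots,a_q$ to $c_0,\dots,c_q$ is a path, a matching saturating every $a_k$ leaves exactly one $c_j$ free, and matching that free vertex through the pendant structure yields a uniformly two-to-one correspondence with the matchings of the single fan of $AR_{\frac12,q-1}$, giving $\M({}_|AR_{1,q}\#G)=2\,\M(AR_{\frac12,q-1}\#G)$.

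\textbf{Main obstacle and an alternative.} The step I expect to be hardest is not the factor of $2^{p}$ itself, which the disjoint-diagonal argument pins down cleanly, but verifying that the graph surviving all the splittings, renewals and forced-edge removals is \emph{precisely} the baseless Aztec rectangle $AR_{p-\frac12,q-1}$ and not some other decoration of it; this requires carefully tracking the boundary cascade and confirming the Star-Lemma normalization does not perturb the exponent. A shorter but logically dependent route would be to read the lemma off from Lemma \ref{masstransform2}(b) with $m=0$, by realizing ${}_|AR_{p,q}$ as the dual graph $G_a(1,\dots,1)$ of the unit-gap generalized Douglas region of height $p$, width $q$, and black bottom row, and checking that its number $\mathcal{C}$ of black regular cells equals $p(q+1)$, so that $2^{\mathcal{C}-hw}=2^{p(q+1)-pq}=2^{p}$. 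Since Lemma \ref{masstransform2} is itself established from the present lemma, I would keep the direct reduction as the actual argument and record the specialization only as a consistency check.
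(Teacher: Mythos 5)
Your central mechanism for producing the factor $2^{p}$ does not work. You propose to apply urban renewal only to $p$ pairwise vertex-disjoint faces along a diagonal of the Aztec rectangle, collect a factor $2$ from each, and then let forced edges ``cascade'' the remaining graph into $AR_{p-\frac12,q-1}$. But the Spider Lemma is a local move: renewing $p$ cells alters the adjacencies of at most the $8p$ vertices of those cells, whereas passing from ${}_|AR_{p,q}$ to $AR_{p-\frac12,q-1}$ is a global rearrangement in which essentially every one of the $\sim 2pq$ vertices changes its neighborhood (the row lengths shift from the pattern $q,q+1,q,\dots$ to $q-1,q,q-1,\dots$). No forced-edge cascade can supply the missing $p(q-1)$ renewals, so the surviving graph after your $p$ diagonal renewals is not the target graph. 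The correct version of this argument --- the one the paper carries out explicitly for the analogous Lemma \ref{OTrans}(a) --- applies vertex splitting at \emph{all} relevant vertices, urban renewal at \emph{all} $pq$ cells (factor $2^{pq}$), removes the forced boundary edges, and then uses the Star Lemma with $t=2$ at an independent set of vertices covering every weight-$\tfrac12$ edge to restore unit weights; the net exponent is $pq-p(q-1)=p$. The factor $2^{p}$ is thus a global cancellation, not a sum of $p$ local contributions, and your assertion that ``the Star factors combine with the $\Delta$-factors to leave the net power $2^{p}$ unchanged'' is exactly the step that cannot be made to work in your setup (any nontrivial Star normalization changes the power of $2$, and omitting it leaves weight-$\tfrac12$ edges in the graph). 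Two further symptoms of the same problem: your vertex count ignores the vertices \emph{added} by the Vertex-Splitting Lemma, and your restriction to $q\ge p$ (needed for the diagonal of $p$ cells to fit) has no counterpart in the lemma or in the paper's technique, which works for all $p,q$.

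On the comparison: the paper does not prove Lemma \ref{T1} directly at all --- it quotes it from \cite{Tri} and observes that it is precisely the $m=0$ specialization of Lemma \ref{masstransform2}(b). So the route you relegate to a ``consistency check'' is in fact the paper's stated justification (and there is no circularity in this paper, since Lemma \ref{masstransform2} is imported from \cite{Tri} with its own proof). If you want a self-contained direct proof, the template to imitate is the paper's proof of Lemma \ref{OTrans}(a), with all $pq$ cells renewed.
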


The next result is due to Cohn, Larsen and Propp (see \cite{Cohn}, Proposition 2.1; \cite{Gessel}, Lemma 2). A \textit{$(a,b)$-semihexagon} is the bottom half of a lozenge hexagon  of side-lengths $b$, $a$, $a$, $b$, $a$, $a$ (clockwise from top) on the triangular lattice.

\begin{lem}\label{Hexdent}  Label the topmost vertices of the dual graph of $(a,b)$-semihexagon from left to right by $1,\dotsc,a+b$, and the number of perfect matchings of the graph obtained from it by removing the vertices with labels in the set $\{r_1,\dotsc,r_{a}\}$ is equal to
\begin{equation}\label{hexdenteq}
\prod_{1\leq i<j\leq a}\frac{r_j-r_i}{j-i},
\end{equation}
where $1\leq r_1<\cdots<r_{a}\leq a+b$ are given integers.
\end{lem}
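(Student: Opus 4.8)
The plan is to convert the matching count into a count of non-intersecting lattice paths via the Lindström--Gessel--Viennot (LGV) lemma and then evaluate the resulting determinant, which will turn out to be a Vandermonde-type one. First I would make the geometry explicit: the dual graph of the $(a,b)$-semihexagon corresponds to a trapezoidal region on the triangular lattice whose top edge carries $a+b$ up-pointing unit triangles (the topmost vertices, labelled $1,\dotsc,a+b$) and whose lower boundary is fixed. Removing the vertices labelled $r_1<\dots<r_a$ means deleting the $a$ up-triangles at those positions, so that $b$ of them survive along the top, and a perfect matching of the remaining dual graph is exactly a lozenge tiling of the trapezoid with those $a$ triangular notches cut out of its top edge.

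Next I would set up the standard bijection between lozenge tilings and families of non-intersecting monotone paths. Selecting one of the three lozenge orientations (the rhombi built from two triangles sharing a slanted edge), the chosen lozenges in any tiling glue together into a family of pairwise non-crossing paths. With the indexing arranged so that the sources are dictated by the removed positions $r_1,\dotsc,r_a$ and the sinks are the $a$ unit segments along one slanted side of the trapezoid, one gets $a$ paths, and the requirement that the lozenges tile the region without gaps or overlaps is precisely the non-intersecting condition. Applying the LGV lemma then gives the number of tilings as $\det_{1\le i,j\le a}[M_{ij}]$, where $M_{ij}$ is the number of individual monotone paths from the $j$-th source to the $i$-th sink. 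A direct count of single paths (each descends through the rows of the trapezoid, and the admissible ones are counted by the placement of their horizontal steps) shows that, after the suitable indexing, $M_{ij}=\binom{r_j-1}{i-1}$.

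Finally, I would evaluate the determinant by the usual Vandermonde argument. Since $\binom{x}{i-1}$ is a polynomial in $x$ of degree $i-1$ with leading coefficient $1/(i-1)!$, pulling these coefficients out of the rows and performing row operations reduces the matrix to $[r_j^{\,i-1}]$, so that
\[
\det_{1\le i,j\le a}\Bigl[\tbinom{r_j-1}{i-1}\Bigr]
=\frac{1}{\prod_{i=1}^{a}(i-1)!}\,\prod_{1\le i<j\le a}(r_j-r_i).
\]
Using the identity $\prod_{i=1}^{a}(i-1)!=\prod_{1\le i<j\le a}(j-i)$ then yields exactly the claimed product $\prod_{1\le i<j\le a}\frac{r_j-r_i}{j-i}$.

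I expect the main obstacle to be the path-correspondence step rather than the algebra. Getting the triangular-lattice geometry right---choosing the lozenge orientation, and labelling the sources and sinks so that the deleted top triangles correspond bijectively to path sources and so that the single-path counts come out as precisely the binomials $\binom{r_j-1}{i-1}$---is the delicate part; once the matrix entries are pinned down, both the LGV application and the determinant evaluation are routine. (As an alternative I could instead pass through the classical bijection between these tilings and Gelfand--Tsetlin patterns and invoke the Weyl dimension formula, but the LGV route is more self-contained.)
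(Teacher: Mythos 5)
The paper does not prove this lemma at all: it is imported verbatim as a known result, with the proof delegated to the cited sources (Cohn--Larsen--Propp, Proposition~2.1, and Helfgott--Gessel, Lemma~2). So there is no in-paper argument to compare against; what you have written is, in effect, a reconstruction of the standard proof from the literature. Your Lindstr\"om--Gessel--Viennot route is essentially the Helfgott--Gessel proof, and the alternative you mention in passing (Gelfand--Tsetlin patterns plus the Weyl dimension formula) is essentially the Cohn--Larsen--Propp route, so both branches of your plan are well grounded. The determinant evaluation is correct as stated: $\binom{x}{i-1}$ has degree $i-1$ and leading coefficient $1/(i-1)!$, so row reduction gives
\begin{equation*}
\det_{1\le i,j\le a}\Bigl[\tbinom{r_j-1}{i-1}\Bigr]=\frac{\prod_{1\le i<j\le a}(r_j-r_i)}{\prod_{i=1}^{a}(i-1)!},
\qquad
\prod_{i=1}^{a}(i-1)!=\prod_{1\le i<j\le a}(j-i),
\end{equation*}
which is the claimed product. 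You correctly identify the only place where real work remains: pinning down the path model so that the $(i,j)$ entry is literally $\binom{r_j-1}{i-1}$. That step does go through with the standard choice (paths issuing from the $a$ dents and terminating on the $a$ unit segments of one slanted leg, ordered compatibly so that non-intersecting families correspond bijectively to tilings and the signed LGV sum collapses to a single permutation); note also that other natural indexings produce entries such as $\binom{a+b-r_j}{a-i}$, which yield the same determinant up to the same factorial normalization, so the conclusion is robust to those bookkeeping choices. In short: the proposal is sound and matches the standard proof of the cited result, but be aware that within this paper the lemma is a quoted prerequisite rather than something proved.
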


We conclude this section by quoting the following result of Krattenthaler.

\begin{lem}[Krattenthaler \cite{Krat}, Theorem 14]\label{schur3}
Let $m,n,c,f$ be positive integers, and $d$ be a nonnegative integer with $2n+1\leq 2m+d-1\geq n$ and $c+(2n-2m-d+1)f\leq n+1$. Let $G$ be a $(2m+d-1)\times n$ Aztec rectangle, where all the vertices on the horizontal row that is by $d\sqrt{2}/2$ units below the central row, except for the $c$-th, $(c+f)$-th, \dots, and the $(c+(2n-2m-d+1)f)$-th vertex, have been removed. Then the number o perfect matchings of $G$  equals
\begin{align}\label{Krat-formula}
&2^{\binom{2m+d}{2}+(n+1)(n-2m-d+1)}f^{m^2+(d-1)m+\binom{d}{2}+n(n-2m-d+1)}\notag\\ &\times
\frac{\prod_{i=m+1}^{n+1}(i-1)!\prod_{i=m+d+1}^{n+1}(i-1)!\prod_{i=1}^{n-m+1}(i-1)!\prod_{i=1}^{n-m-d+1}(i-1)!}{\prod_{i=1}^{2n-2m-d+2}(c+f(i-1)-1)!(n+1-c-f(i-1))!},
\end{align}
where the product $\prod_{m+d+1}^{n+1}(i-1)!$ has to be interpreted as $1$ if $n=m+d-1$, and as $0$ if $n<m+d-1$, and similarly for the other products
\end{lem}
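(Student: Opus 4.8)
The plan is to follow Krattenthaler's strategy of converting this square-lattice matching count into a triangular-lattice one, extracting the characteristic power of $2$ along the way, and then reducing the remaining enumeration to the Cohn--Larsen--Propp formula. First I would apply urban renewal (Lemma \ref{spider}) together with the Star Lemma (Lemma \ref{star}) face-by-face to the $(2m+d-1)\times n$ Aztec rectangle $G$, transforming each $2\times 2$ block of the underlying square lattice into a triangular-lattice configuration. Since every urban-renewal move multiplies $\M$ by the factor $xz+yt=2$ (after the auxiliary weights are cleared by the Star Lemma), the net effect is to pull out a global factor $2^{\binom{2m+d}{2}+(n+1)(n-2m-d+1)}$, which is exactly the power of $2$ appearing in front of \eqref{Krat-formula}. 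The deleted vertices in the distinguished horizontal row survive these moves as prescribed holes at the arithmetic-progression positions $c,\,c+f,\,\dots,\,c+(2n-2m-d+1)f$.

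Next I would invoke the Graph Splitting Lemma (Lemma \ref{graphsplitting}) along the distinguished row. The retained vertices of that row form the common boundary, and one checks that the induced subgraph $H$ consisting of everything on and above the row satisfies the separating and balancing conditions, so that $\M(G)$ factors as $\M(H)\,\M(G-H)$. Each of the two factors is (after the reduction above) the dual graph of a semihexagonal region on the triangular lattice whose base consists of the retained arithmetic-progression vertices; thus each factor is amenable to the Cohn--Larsen--Propp evaluation of Lemma \ref{Hexdent}, i.e.\ a product of the shape \eqref{hexdenteq} taken over the hole positions of that semihexagon.

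The crux, and the step I expect to be the main obstacle, is the evaluation of these products for the arithmetic-progression pattern and the reconciliation of the power of $f$ with the factorial quotient. Equivalently, bypassing the semihexagon split, one can set up the bijection between matchings of the holey rectangle and families of nonintersecting lattice paths whose endpoints are the retained vertices, and apply the Lindstr\"om--Gessel--Viennot lemma to obtain a bialternant
\[
\det_{1\le i,j\le N}\!\left(\binom{A_i}{\,c+f(j-1)-B_i\,}\right),\qquad N=2n-2m-d+2,
\]
where $A_i,B_i$ are linear in $m,n,d$. The uniform step $f$ in the column index makes this a Vandermonde-type determinant: factoring the step out of the alternant, and collecting the additional powers of $f$ carried by each binomial entry, produces the exponent $m^2+(d-1)m+\binom{d}{2}+n(n-2m-d+1)$ of $f$, while the surviving ratio is precisely the Weyl dimension/hook-content quotient yielding the four factorial products in the numerator of \eqref{Krat-formula} over the denominator $\prod_{i=1}^{2n-2m-d+2}(c+f(i-1)-1)!\,(n+1-c-f(i-1))!$. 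Getting both exponents to land exactly, rather than merely up to an easy Vandermonde factor, is where the genuine computation lies.

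Finally I would assemble the three contributions---the power of $2$ from the urban-renewal reductions, the power of $f$ from the step in the Vandermonde, and the factorial quotient from the dimension formula---and verify they combine into \eqref{Krat-formula}. Throughout I would track the degenerate ranges carefully, using the stated conventions that $\prod_{i=m+d+1}^{n+1}(i-1)!$ equals $1$ when $n=m+d-1$ and $0$ when $n<m+d-1$, and similarly for the other products; the hypotheses $2n+1\le 2m+d-1\ge n$ and $c+(2n-2m-d+1)f\le n+1$ are exactly what guarantee that the path endpoints are well-ordered and that all factorial arguments remain nonnegative, so these inequalities are the conditions under which the bijection and the determinant evaluation are both valid.
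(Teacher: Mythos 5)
Two things up front: the paper offers \emph{no} proof of Lemma \ref{schur3} at all --- it is imported verbatim from Krattenthaler \cite{Krat} and used as a black box --- so your proposal can only be measured against Krattenthaler's published argument, which rests on Schur function identities (hence the title of \cite{Krat}), and against internal consistency.

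The fatal gap is your second step, the appeal to the Graph Splitting Lemma \ref{graphsplitting} along the distinguished row. In $AR_{2m+d-1,n}$ the rows of $n$ vertices and of $n+1$ vertices alternate, and the distinguished row (the one holding the retained vertices at positions $c, c+f,\dotsc$) is the $(m+d)$-th long row. A direct count shows the portion of the graph strictly above it carries an excess of $n+1-m-d$ vertices of the short-row class, and the portion strictly below an excess of $n+1-m$; both are positive in the generic (nonzero) range $n\geq m+d-1$, $d\geq 1$. Consequently, in every perfect matching exactly $n+1-m-d$ of the $2n-2m-d+2$ retained vertices are matched upward and the remaining $n+1-m$ downward, \emph{but which ones is a free choice}. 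Your subgraph $H$ (everything on and above the row) therefore violates the balancing condition: it contains all $2n-2m-d+2$ retained vertices but can absorb only $n+1-m-d$ of them, so $H$ has an excess of $n+1-m>0$ vertices in one class and $\M(H)=0$. Were your claimed factorization $\M(G)=\M(H)\,\M(G-H)$ valid, it would force $\M(G)=0$, contradicting the positive product \eqref{Krat-formula}. What is actually true is that $\M(G)$ equals a \emph{sum}, over the $\binom{2n-2m-d+2}{\,n+1-m-d\,}$ ways to split the retained vertices into an up-set and a down-set, of products of two dented-semihexagon counts of the type in Lemma \ref{Hexdent} --- exactly the structure the paper exhibits in its Theorem \ref{octagon3}(b), which for that very reason yields only a sum formula. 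The entire content of Krattenthaler's theorem is that this sum collapses to a closed product precisely when the retained positions form an arithmetic progression, and his proof accomplishes the collapse via nontrivial Schur function identities. Nothing in your outline performs, or even identifies, this summation.

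Your fallback route does not repair this. Because of the up/down dichotomy, the natural determinantal expression is a sum of products of two determinants (a Cauchy--Binet-type composite), not a single bialternant $\det\bigl(\binom{A_i}{c+f(j-1)-B_i}\bigr)$ with binomial entries, and you give no derivation of such a matrix. More importantly, you explicitly defer the evaluation --- ``getting both exponents to land exactly \dots is where the genuine computation lies'' --- which is an admission that the heart of the proof is absent; the power of $2$ and the degenerate-case bookkeeping are the routine parts. (As a small symptom that the prefactor cannot be a naive count of urban-renewal moves: under the theorem's hypotheses $n-2m-d+1\leq 0$, so the exponent $\binom{2m+d}{2}+(n+1)(n-2m-d+1)$ contains a negative contribution and cannot arise as one factor of $2$ per face.)
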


\section{Proof of Theorem \ref{octagon1}}

Before proving Theorem \ref{octagon1}, we present several new transformations stated below.

\begin{figure}\centering
\begin{picture}(0,0)%
\includegraphics{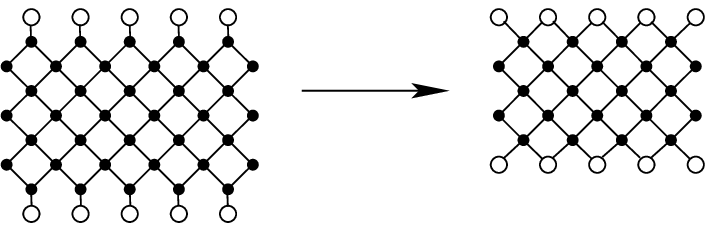}%
\end{picture}%
\setlength{\unitlength}{3947sp}%
\begingroup\makeatletter\ifx\SetFigFont\undefined%
\gdef\SetFigFont#1#2#3#4#5{%
  \reset@font\fontsize{#1}{#2pt}%
  \fontfamily{#3}\fontseries{#4}\fontshape{#5}%
  \selectfont}%
\fi\endgroup%
\begin{picture}(3419,1513)(319,-790)
\put(421,503){\makebox(0,0)[lb]{\smash{{\SetFigFont{10}{12.0}{\rmdefault}{\mddefault}{\updefault}{$A_1$}%
}}}}
\put(1362,493){\makebox(0,0)[lb]{\smash{{\SetFigFont{10}{12.0}{\rmdefault}{\mddefault}{\updefault}{$A_5$}%
}}}}
\put(652,518){\makebox(0,0)[lb]{\smash{{\SetFigFont{10}{12.0}{\rmdefault}{\mddefault}{\updefault}{$A_2$}%
}}}}
\put(1123,502){\makebox(0,0)[lb]{\smash{{\SetFigFont{10}{12.0}{\rmdefault}{\mddefault}{\updefault}{$A_4$}%
}}}}
\put(898,509){\makebox(0,0)[lb]{\smash{{\SetFigFont{10}{12.0}{\rmdefault}{\mddefault}{\updefault}{$A_3$}%
}}}}
\put(2660,532){\makebox(0,0)[lb]{\smash{{\SetFigFont{10}{12.0}{\rmdefault}{\mddefault}{\updefault}{$A_1$}%
}}}}
\put(2900,539){\makebox(0,0)[lb]{\smash{{\SetFigFont{10}{12.0}{\rmdefault}{\mddefault}{\updefault}{$A_2$}%
}}}}
\put(3133,539){\makebox(0,0)[lb]{\smash{{\SetFigFont{10}{12.0}{\rmdefault}{\mddefault}{\updefault}{$A_3$}%
}}}}
\put(3365,539){\makebox(0,0)[lb]{\smash{{\SetFigFont{10}{12.0}{\rmdefault}{\mddefault}{\updefault}{$A_4$}%
}}}}
\put(3605,539){\makebox(0,0)[lb]{\smash{{\SetFigFont{10}{12.0}{\rmdefault}{\mddefault}{\updefault}{$A_5$}%
}}}}
\put(430,-766){\makebox(0,0)[lb]{\smash{{\SetFigFont{10}{12.0}{\rmdefault}{\mddefault}{\updefault}{$B_1$}%
}}}}
\put(664,-769){\makebox(0,0)[lb]{\smash{{\SetFigFont{10}{12.0}{\rmdefault}{\mddefault}{\updefault}{$B_2$}%
}}}}
\put(892,-772){\makebox(0,0)[lb]{\smash{{\SetFigFont{10}{12.0}{\rmdefault}{\mddefault}{\updefault}{$B_3$}%
}}}}
\put(1132,-775){\makebox(0,0)[lb]{\smash{{\SetFigFont{10}{12.0}{\rmdefault}{\mddefault}{\updefault}{$B_4$}%
}}}}
\put(1369,-772){\makebox(0,0)[lb]{\smash{{\SetFigFont{10}{12.0}{\rmdefault}{\mddefault}{\updefault}{$B_5$}%
}}}}
\put(2668,-531){\makebox(0,0)[lb]{\smash{{\SetFigFont{10}{12.0}{\rmdefault}{\mddefault}{\updefault}{$B_1$}%
}}}}
\put(2902,-534){\makebox(0,0)[lb]{\smash{{\SetFigFont{10}{12.0}{\rmdefault}{\mddefault}{\updefault}{$B_2$}%
}}}}
\put(3130,-537){\makebox(0,0)[lb]{\smash{{\SetFigFont{10}{12.0}{\rmdefault}{\mddefault}{\updefault}{$B_3$}%
}}}}
\put(3370,-540){\makebox(0,0)[lb]{\smash{{\SetFigFont{10}{12.0}{\rmdefault}{\mddefault}{\updefault}{$B_4$}%
}}}}
\put(3607,-537){\makebox(0,0)[lb]{\smash{{\SetFigFont{10}{12.0}{\rmdefault}{\mddefault}{\updefault}{$B_5$}%
}}}}
\end{picture}%
\caption{Illustrating the transformation in Lemma \ref{OTrans}(a).}
\label{elem1}
\end{figure}

\begin{figure}\centering
\includegraphics[width=7cm]{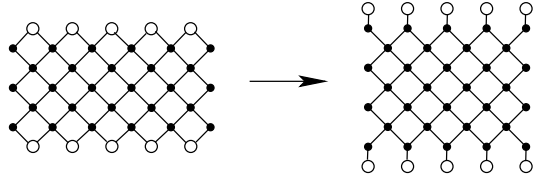}
\caption{Illustrating the transformation in Lemma \ref{OTrans}(b).}
\label{newT5}
\end{figure}

\begin{lem}\label{OTrans}
Let $G$ be a graph and let $\{v_1,\dots,v_{2q}\}$ be an ordered subset of its vertices. Then

(a) \begin{equation}\label{ot1}
 \M\left({}_|^|AR_{p,q}\#G\right)=2^{p}\M(OR_{p,q-1}\#G),
 \end{equation}
where ${}_|^|AR_{p,q}$ is the graph obtained from $AR_{p,q}$ by appending $q$ vertical edges  to its top vertices, and $q$ vertical edges to its bottom vertices; and where the connected sum acts on $G$ along $\{v_1,v_2,\dotsc,v_{2q}\}$, and on ${}_|^|AR_{p,q}$ and $OR_{p,q-1}$ along their  $q$ top vertices ordered from left to right, then along their $q$ bottom vertices ordered from left to right. See Figure \ref{elem1} for the case $p=3$ and $q=5$.

(b) \begin{equation}\label{ot2}
\M(AR_{p,q}\#G)=2^{p}\M\left({}_|^|OR_{p,q-1}\#G\right),
\end{equation}
where ${}_|^|OR_{p,q-1}$ is the graph obtained from $OR_{p,q-1}$ by appending $q$ vertical edges to its top vertices and $q$ vertical edges to its bottom vertices; and where the connected sum acts on $G$ along $\{v_1,v_2,\dotsc,v_{2q}\}$, and on $AR_{p,q}$ and ${}_|^|OR_{p,q-1}$ along their  $q$ top vertices  ordered from left to right, then along their $q$ bottom vertices ordered from left to right. See the illustration in Figure \ref{newT5} for the case $p=3$ and $q=5$.
\end{lem}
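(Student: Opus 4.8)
The plan is to prove both identities by the subgraph-replacement (urban renewal) method, exactly in the spirit of Lemma~\ref{T1} and the composite transformation Lemma~\ref{masstransform2}. The two statements are dual to one another: part~(a) turns the white-square graph ${}_|^|AR_{p,q}$ into the black-square graph $OR_{p,q-1}$, while part~(b) runs the same color-swap in the opposite direction, $AR_{p,q}\to {}_|^|OR_{p,q-1}$. I will carry out part~(a) in full and obtain part~(b) by the same template applied to the black cells, recording only where the bookkeeping differs.

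For part~(a), I would first put weight $1$ on every edge of ${}_|^|AR_{p,q}\#G$, so that the weighted matching count agrees with $\M$. The Aztec-rectangle faces are bare $4$-cycles, so the Spider Lemma (Lemma~\ref{spider}) does not apply directly; I would therefore first invoke the Vertex-Splitting Lemma (Lemma~\ref{VS}) at the interior vertices to manufacture, around each interior black square, the ``city'' configuration $K$ of Figure~\ref{spider1}, whose four inner vertices have no outside neighbors. Then I apply the Spider Lemma to every city. Since all weights are $1$, each urban renewal contributes the factor $\Delta=xz+yt=2$ and leaves the four boundary edges weighted $1/\Delta=1/2$. After all the moves, the surviving graph is a weighted copy of the diagonal-adjacency graph on the \emph{black} squares, i.e.\ $OR_{p,q-1}$, with the appended top and bottom edges of ${}_|^|AR_{p,q}$ absorbed into the boundary so that the former pendant endpoints become exactly the $q$ top and $q$ bottom vertices of $OR_{p,q-1}$ in the prescribed left-to-right order. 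I would then use the Star Lemma (Lemma~\ref{star}) at the black vertices to clear the $1/2$ weights, renormalizing back to the unweighted $OR_{p,q-1}$, and remove the forced edges that appear along the boundary.

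The crux of the argument is the power-of-$2$ bookkeeping. The Spider Lemma produces a large raw power of $2$ (one factor for each city), the Star-Lemma renormalizations contribute a compensating negative power, and the forced-edge removals are weight-neutral; the content of the lemma is that these contributions collapse to exactly $2^{p}$, independently of $q$, mirroring the way a full sweep of urban renewal on an Aztec diamond $AD_n$ collapses a raw factor $2^{n^2}$ down to $2^{n}$. I expect this cancellation, together with verifying that the boundary identifications come out in the correct order so that the connected sum with $G$ is respected step by step, to be the main obstacle; the interior moves are routine once the city pattern is fixed. Throughout, the moves are local to the Aztec portion and never touch $G$ except at the shared gluing vertices, so the identity holds for an arbitrary host graph $G$.

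For part~(b), I would run the identical pipeline on $AR_{p,q}\#G$, splitting vertices to build cities around the interior black squares, applying the Spider Lemma and then the Star Lemma. The only differences are that here there are no appended edges on the input side, so the color-swap deposits the new pendant edges on the top and bottom of the output graph, producing precisely ${}_|^|OR_{p,q-1}$, and that the boundary forced-edge analysis is the mirror image of part~(a). The net power of $2$ is again $2^{p}$ by the same collapse. As a consistency check on the count, I would also verify part~(b) against part~(a) by feeding the appended-edge structure into the host graph, which should reproduce the same factor.
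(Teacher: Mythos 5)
Your proposal follows essentially the same route as the paper's proof: vertex-splitting to create the urban-renewal ``cities,'' the Spider Lemma applied to all $pq$ diamonds (each contributing $\Delta=2$ and leaving weight-$1/2$ edges), removal of the forced pendant edges, and the Star Lemma with $t=2$ at the $p(q-1)$ vertices of $OR_{p,q-1}$ to clear the $1/2$ weights, giving the net factor $2^{pq}\cdot 2^{-p(q-1)}=2^{p}$; part (b) is likewise dismissed as analogous in the paper. The approach and bookkeeping match, so the proposal is correct.
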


\begin{figure}\centering
\includegraphics[width=11cm]{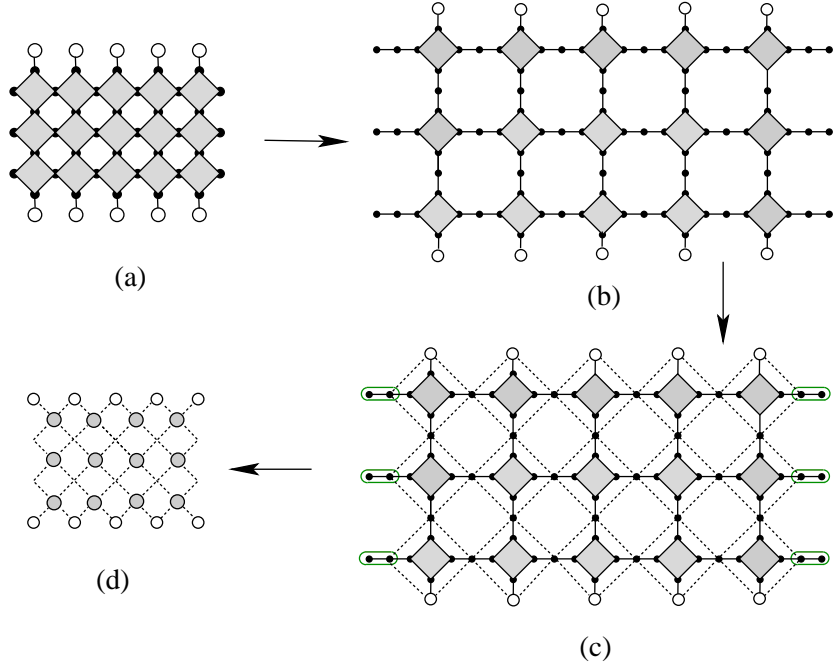}
\caption{Illustrating the proof of Lemma \ref{OTrans}(a).}
\label{proof1}
\end{figure}

\begin{proof}
We only prove part (a); and part (b) can be shown similarly.

Let $G_1$ be the graph obtained from ${}_|^|AR_{p,q}\#G$ by applying Graph-splitting Lemma \ref{VS} at all vertices that are not the end point of a vertical edge (see Figures \ref{proof1} (a) and (b) for the case $p=3$ and $q=5$). Apply Spider lemma around all $pq$ shaded diamonds in the graph $G_1$, and remove $2q$ edges adjacent to a vertex of degree 1, which are forced edges (illustrated in Figure \ref{proof1} (c); the forced edges are the circled ones). The resulting graph is isomorphic to $OR^{1/2}_{p,q-1}\#G$, where $OR^{1/2}_{p,q-1}$ is the graph obtained from $OR_{p,q-1}$ by changing all weights of edges to $1/2$ (see Figure \ref{proof1}(d); the dotted edges have weight $1/2$). By Lemmas \ref{VS} and \ref{spider}, we have
\begin{equation}\label{ot3}
\M({}_{|}^{|}AR_{p,q}\#G)=\M(G_1)=2^{pq}\M(OR^{1/2}_{p,q-1}\#G).
\end{equation}
Next, we apply Star Lemma \ref{star} with weight factor $t=2$ to all $p(q-1)$ shaded vertices of the graph $OR^{1/2}_{p,q-1}$ (see Figure \ref{proof1}(d)), the graph $OR^{1/2}_{p,q-1}\#G$ is turned into $OR_{p,q-1}\#G$. By the equality (\ref{ot3}) and Spider Lemma \ref{spider}, we get
\begin{equation}
\M({}_{|}^{|}AR_{p,q}\#G)=2^{pq}2^{-p(q-1)}\M(OR_{p,q-1}\#G),
\end{equation}
which implies (\ref{ot1}).
%
%
%
\end{proof}

Denote by $AR_{m,n}^{d}(A)$ the graph obtained from the Aztec rectangle $AR_{m,n}$ by removing all vertices at the positions in $A\subset\{1,\dotsc,n+1\}$  from the row that is by $d\sqrt{2}/2$ units below the central row (see Figure \ref{massivehole}(c) for an example).

\begin{lem}\label{massholelem}
Let $a$, $b$, $c$, $d$, $a'$, $b'$ be positive integers, so that $a+b=a'+b'$, $c<\min (a,a')$, and $d<\min(a,a')$.
Let $H_{a,b,c,d}^{a',b'}:=H_1\# H_2$, where $H_1$ is the dual graph of a hexagon of sides $b$, $a-d$, $d$, $a+b-c-d$, $c$, $a-c$, and  $H_2$ is the dual graph of a hexagon  of sides $a+b-c-d$, $d$, $a'-d$, $b'$, $a'-c$, $c$  (in cyclic order, starting from the north side); and where the connected sum acts on $H_1$ along its $a+b-c-d$ bottom vertices ordered  from left to right, and on $H_2$ along its $a+b-c-d$ top vertices ordered from left to right (see Figure \ref{massivehole}(a) for the case $a=7$, $b=3$, $c=3$, $d=4$, $a'=8$, $b'=2$). Then
\begin{equation}\label{combine}
\M(H_{a,b,c,d}^{a',b'})=2^{-a(a-1)/2-a'(a'-1)/2}\M(AR_{a+a'-1,a+b-1}^{a-a'}(A)),
\end{equation}
where $A=\{1,\dots,c\}\cup\{a+b-d+1,\dots,a+b\}$.
\end{lem}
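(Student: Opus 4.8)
The plan is to start from the right-hand side and peel it apart into the two hexagons, using the contiguous block of surviving hole-row vertices as the seam. First I would unravel the combinatorics of the hole set. The special row of $AR_{a+a'-1,\,a+b-1}^{a-a'}(A)$ carries $a+b$ vertices in positions $1,\dots,a+b$, and $A=\{1,\dots,c\}\cup\{a+b-d+1,\dots,a+b\}$ deletes the $c$ leftmost and the $d$ rightmost of them; hence the vertices surviving on that row are exactly those in positions $c+1,\dots,a+b-d$, a contiguous block $\mathcal{B}$ of length $a+b-c-d$. Because every edge of an Aztec rectangle joins vertices in two consecutive horizontal rows, no edge joins a vertex strictly above the special row to one strictly below it, so all communication between the two halves is forced through $\mathcal{B}$. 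Consequently the graph is a connected sum $AR_{a+a'-1,\,a+b-1}^{a-a'}(A)=P_{\mathrm{up}}\#P_{\mathrm{down}}$ along $\mathcal{B}$, where $P_{\mathrm{up}}$ (resp.\ $P_{\mathrm{down}}$) is the subgraph induced on the rows on or above (resp.\ on or below) the special row. The block $\mathcal{B}$ has exactly $a+b-c-d$ vertices, which is precisely the common length of the south side of $H_1$ and the north side of $H_2$, so it is the correct seam. (By the top-bottom reflection symmetry of the construction I may assume $a\ge a'$, so that this row lies on or below the central row.)

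The heart of the argument is a hexagon-to-Aztec reduction for each piece, carried out exactly as in the proof of Lemma \ref{OTrans}. I would establish
\[
\M(P_{\mathrm{up}}\#G)=2^{\binom{a}{2}}\,\M(H_1\#G),\qquad \M(P_{\mathrm{down}}\#G)=2^{\binom{a'}{2}}\,\M(H_2\#G),
\]
for an arbitrary graph $G$ attached along $\mathcal{B}$. Each identity is proved by the same three-move routine used for Lemma \ref{OTrans}: apply the Vertex-Splitting Lemma \ref{VS} to open up the interior vertices, apply the Spider Lemma \ref{spider} (urban renewal) to every resulting interior rhombus, delete the forced edges that appear, and finally clear the $1/2$-weights using the Star Lemma \ref{star}. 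I would organise this as an induction that peels off one diagonal layer of the triangular array at a time. The number of rows of $P_{\mathrm{up}}$ above $\mathcal{B}$ is governed by $a$ (and that of $P_{\mathrm{down}}$ below $\mathcal{B}$ by $a'$); the $c$ deleted vertices on the left and the $d$ deleted vertices on the right of the special row become precisely the dents that bend the boundary into the slanted sides $a-c,c$ and $a-d,d$ of $H_1$ (resp.\ $a'-c,c$ and $a'-d,d$ of $H_2$), while $b$ and $b'$ emerge as the remaining free sides. Since every move is performed in the interior of a piece and leaves $\mathcal{B}$ untouched, these reductions are valid inside the connected sum, exactly as the statements of Lemmas \ref{masstransform2} and \ref{OTrans} allow, so I may apply the first with $G=P_{\mathrm{down}}$ and then the second with $G=H_1$.

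Combining the two reductions through the seam then gives
\[
\M\!\left(AR_{a+a'-1,\,a+b-1}^{a-a'}(A)\right)=2^{\binom{a}{2}}\,\M(H_1\#P_{\mathrm{down}})=2^{\binom{a}{2}+\binom{a'}{2}}\,\M(H_1\#H_2),
\]
and since $\binom{a}{2}=a(a-1)/2$ and $\binom{a'}{2}=a'(a'-1)/2$, dividing through yields (\ref{combine}). A preliminary check that the prescribed side-lengths of $H_1$ and $H_2$ satisfy the hexagon balancing relations (which is exactly where the hypothesis $a+b=a'+b'$ is used, together with $c,d<\min(a,a')$) guarantees that both hexagons, and hence the glued region, are genuine lattice regions, so no piece is forced to have zero matchings for trivial reasons.

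The step I expect to be the main obstacle is the exact bookkeeping in the reduction: proving that the iterated urban renewal on $P_{\mathrm{up}}$ contributes the power $2^{\binom{a}{2}}$ and nothing more, i.e.\ that the $2^{pq}$ factors produced by the spider moves and the $2^{-p(q-1)}$ factors produced by the star moves telescope across the peeled layers to exactly $\binom{a}{2}$, and simultaneously confirming that the two blocks of deleted vertices ($c$ on the left, $d$ on the right) produce precisely the claimed slanted sides and not some shifted dentation. Lining up the hole positions with the corners of the hexagons, uniformly in $b,b',c,d$, is where the care lies; the power of two itself is pleasantly insensitive to $b,b',c,d$ and depends only on the heights $a,a'$, which is the structural reason the exponent in (\ref{combine}) collapses to the clean form $-a(a-1)/2-a'(a'-1)/2$.
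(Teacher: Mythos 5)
Your argument is correct and is essentially the paper's proof run in the opposite direction: the paper starts from $H_1\# H_2$, restores the forced vertical edges to get two dented semihexagons $S_1\# S_2$ glued along the same seam of $a+b-c-d$ vertices, and applies Lemma \ref{masstransform2}(b) to each half to produce the holey Aztec rectangle, with $2^{\mathcal{C}-hw}=2^{-a(a-1)/2}$ (resp.\ $2^{-a'(a'-1)/2}$) giving exactly your factors $2^{\binom{a}{2}}$, $2^{\binom{a'}{2}}$. The per-half reduction you propose to re-derive by the vertex-splitting/spider/star routine is precisely the already-available Lemma \ref{masstransform2}(b) with the $c$ leftmost and $d$ rightmost bottom vertices removed, so no new bookkeeping is actually needed.
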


\begin{figure}\centering
\includegraphics[width=10cm]{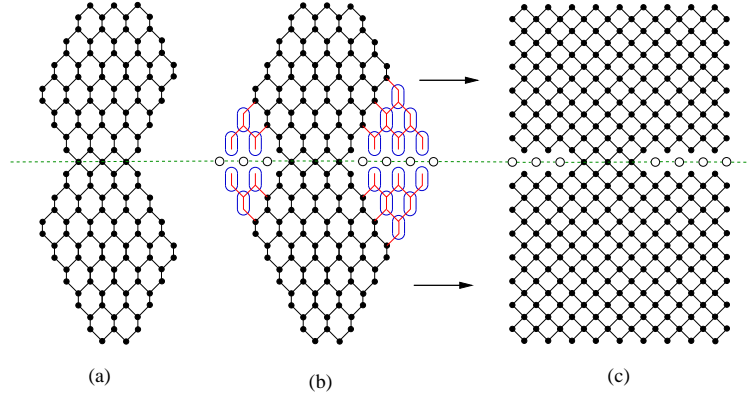}
\caption{Illustrating the proof of Lemma \ref{massholelem}.}
\label{massivehole}
\end{figure}

\begin{proof}
Consider the graph  $\widetilde{G}:=S_1\# S_2$, where $S_1$ is the graph obtained from the dual graph of the $(a,b)$-semihexagon by removing the $c$ leftmost and the $d$ rightmost bottom vertices, and  $S_2$ is the graph obtained from the dual graph of the $(a',b')$-semihexagon by removing the $c$ leftmost and the $d$ rightmost bottom vertices (see Figure \ref{massivehole}(b)), and where the connected sum acts on $S_1$ and $S_2$ along their bottom vertices ordered from left to right.  Since $H_{a,b,c,d}^{a',b'}$ is  obtained from $\widetilde{G}$ by removing vertical forced edges (the circled edges in Figure \ref{massivehole}(b)), $\M(H_{a,b,c,d}^{a',b'})=\M(\widetilde{G}).$

Apply the transformation in Lemma \ref{masstransform2}(b)  to $S_1$, we replace this graph by the graph $AR_{a-\frac{1}{2},a+b}$ with the $c$ leftmost and the $d$ rightmost bottom vertices removed. Apply this transformation one more time to $S_2$; then $S_2$ gets transformed into  the graph $AR_{a'-\frac{1}{2},a'+b'}$ with the $c$ leftmost and the $d$ rightmost bottom vertices removed.  This way, the graph $S_1\#S_2$ gets transformed precisely into the graph on the right hand side of (\ref{combine}) (see Figure \ref{massivehole}(c)). Then the lemma follows from Lemmas  \ref{masstransform2} and \ref{schur3}.
\end{proof}

\medskip

We introduce several new terminology and notations as follows.

\medskip

We divide the family of quasi-octagons into four subfamilies, based on the color of the triangles running right above $\ell$ and the color of the triangles  running right below $\ell'$. In particular, \textit{type-1 octagons} have black triangles running right above $\ell$ and right below $\ell'$; \textit{type-2 octagons} have those triangles white; \textit{type-3 octagons} have the triangles right above $\ell$ black and  the triangles right below $\ell'$ white; and  \textit{type-4 octagons} have white triangles right above $\ell$, and black triangles right below $\ell'$. To specify the type of an octagon, we denote by
\[\mathcal{O}^{(k)}_{a}(d_1,\dotsc,d_k;\ \overline{d}_1,\dotsc, \overline{d}_t;\ d'_1,\dotsc,d'_l)\]
the type-$k$ quasi-octagon with corresponding parameters (i.e. we add the superscript $k$ to the original denotation of the quasi-octagon). The
dual graph of the region is denoted by \[G^{(k)}_{a}(d_1, \dotsc, d_k;\ \overline{d}_1, \dotsc, \overline{d}_t;\ d'_1, \dotsc, d'_l).\]

The diagonals divide the middle part of a  quasi-octagon into of $t$ parts, called \textit{middle layers}. We define the \textit{height} of a middle layer to be the number of rows of white regular cells in the layer, the \textit{width} of the layer to be the number of cells on each of those rows. Assume that the $i$-th middle layer has the height $a_i$ and the width $b_i$, for $1\leq i \leq t$; then the middle height of the quasi-octagon is $\sum_{i=1}^ta_i$. Moreover, one can see that $|b_i-b_{i+1}|=1$, for any $i=1,\dotsc,t-1$. A term $b_i$ satisfying $b_i=b_{i-1}-1=b_{i+1}-1$ (resp., $b_i=b_{i-1}+1=b_{i+1}+1$) is called a \textit{concave term} (resp.,  a \textit{convex term}) of the sequence $\{b_j\}_{j=1}^t$, for $i=2,\dotsc,t-1$.

\medskip

We are now ready to prove Theorem \ref{octagon1}.

\medskip

\textbf{Out line of the proof:}
\begin{itemize}
\item We consider 4 cases, based on the type of the quasi-octagon. We prove in detail the case of type-1 quasi-octagons,
and the other cases can be implied from this case.
 \item The proof the theorem for the type-1 quasi-octagon \[\mathcal{O}^{(1)}_{a}(d_1,\dotsc,d_k;\ \overline{d}_1,\dotsc, \overline{d}_t;\ d'_1,\dotsc,d'_l).\] can be divided into 3 steps:
 \begin{itemize}
 \item Simplifying to the case when $k=l=1$.
 \item Simplifying further to the case when $k=l=t=1$.
  \item Proving the statement for $k=l=t=1$.
 \end{itemize}
 \end{itemize}

\medskip

\begin{proof}[Proof of Theorem \ref{octagon1}]
Without loss of generality, we can assume that $h_1\geq h_3$ (otherwise, we reflect the region about $\ell$ and get a new quasi-octagon with the upper hight larger than the lower height).

Denote by $\mathcal{P}(c,f,m,d,n)$ the expression (\ref{Krat-formula}) in Lemma \ref{schur3}. The statement in part (b) of the  theorem is equivalent to
\begin{align}\label{mainoc'}
\M(\mathcal{O})&=2^{\mathcal{C}_1+\mathcal{C}_2+\mathcal{C}_3-h_1(2w-h_1+1)/2-h_2(2w-h_2+1)/2-h_3(2w-h_3+1)/2}\notag\\
&\times2^{-\binom{h_1+h_2}{2}-\binom{h_2+h_3}{2}}\mathcal{P}(h_2+1,1,h_2+h_3,h_1-h_3,w+h_2-1).
\end{align}
We have four cases to distinguish, based on the type of  the quasi-octagon.

\medskip

\quad\textit{Case 1. $\mathcal{O}$ is of type 1.}

\medskip

\begin{figure}\centering
\includegraphics[width=6cm]{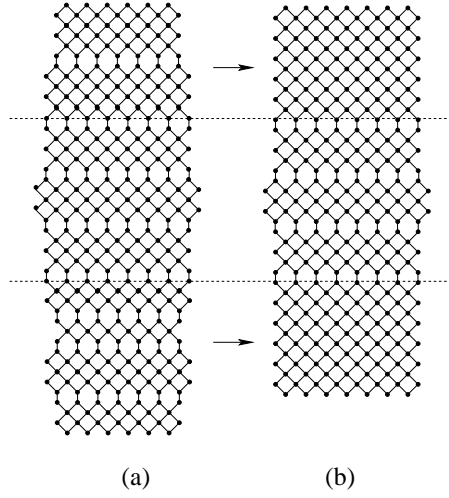}
\caption{Application of the transformation in Lemma \ref{masstransform2} to the upper and the lower parts of the dual graph of a quasi-octagon.}
\label{octa1a}
\end{figure}

STEP 1. \textit{Simplifying to the case when $k=l=1$.}\\

Let $G$ be the dual graph of the type-1 quasi-octagon
\[\mathcal{O}:=\mathcal{O}^{(1)}_{a}(d_1,\dotsc,d_k;\ \overline{d}_1,\dotsc, \overline{d}_t;\ d'_1,\dotsc,d'_l).\]
Apply the  composite transformation in Lemma \ref{masstransform2}(b) with $m=0$, separately to the portions of  $G$  corresponding to the parts above $\ell$ and below $\ell'$ of the region $\mathcal{O}$ (which we call the \textit{upper} and \textit{lower parts} of the dual graph $G$; similarly, the \textit{middle part} of $G$ corresponds to the part between $\ell$ and $\ell'$ of $\mathcal{O}$). We replace the upper part of $G$ by the graph $AR_{h_1-\frac{1}{2},w-1}$, and the lower part by the graph $AR_{h_2-\frac{1}{2},w-1}$ flipped over its base.  This way, $G$ gets transformed into the dual graph $G'$ of the type-1 quasi-octagon
\begin{equation}
\overline{\mathcal{O}}=\mathcal{O}^{(1)}_{w-1}(2h_1-1;\ \overline{d}_1,\dotsc, \overline{d}_t; 2h_3-1),
\end{equation}
and by Lemma \ref{masstransform2}, we obtain
\begin{equation}\label{octaeq1}
\M(\mathcal{O})=2^{\mathcal{C}_1-h_1w+\mathcal{C}_3-h_3w}\M(\overline{\mathcal{O}})
\end{equation}
(see Figure \ref{octa1a} for an example).

It is easy to check that $\mathcal{O}$ and $\overline{\mathcal{O}}$ have the same heights, the same widths, and the same middle part. This  and the equality (\ref{octaeq1}) imply that part (a) is true for $\mathcal{O}$ if and only if it is true for $\overline{\mathcal{O}}$. Suppose now that $h_1+h_3=h_2+w$,  the number of black regular cells in the upper part of $\overline{\mathcal{O}}$ is $\overline{\mathcal{C}}_1=h_1w$, and the number of black regular cells in the lower part of $\overline{\mathcal{O}}$ is $\overline{\mathcal{C}}_3=h_3w$ (note that we have $\mathcal{C}_2$= $\overline{\mathcal{C}}_2$ since two regions have the same middle part). Therefore, by (\ref{octaeq1}) again, we have (\ref{mainoc'}) is equivalent to
\begin{align}
\M(\overline{\mathcal{O}})&=2^{\overline{\mathcal{C}}_1+\overline{\mathcal{C}}_2+\overline{\mathcal{C}}_3-h_1(2w-h_1+1)/2-h_2(2w-h_2+1)/2-h_3(2w-h_3+1)/2}\notag\\
&\times2^{-\binom{h_1+h_2}{2}-\binom{h_2+h_3}{2}}\mathcal{P}(h_2+1,1,h_2+h_3,h_1-h_3,w+h_2-1).
\end{align}
It means that the statement in part (b) is true for the region $\mathcal{O}$ if and only if it is true for the region $\overline{\mathcal{O}}$ that has only one layer in its upper part, and only one layer in its lower part. Therefore, without of loss
 generality, we can assume that our quasi-octagon $\mathcal{O}$ has $k=l=1$.

 \medskip

\textit{STEP 2. Simplifying further to the case when $k=l=t=1$.}\\
  Assume that the $i$-th middle layer of $\mathcal{O}$ has the height $a_i$ and the width $b_i$, for $i=1,2,\dotsc,t$. Since $\mathcal{O}$ is a type-1 quasi-octagon, we have $b_1=|BG|=w=|CF|=b_t$ and
\begin{equation}\label{octaeq2}
\sum_{i=1}^{t-1}(b_i-b_{i+1})=b_1-b_t=0.
\end{equation}
Since each term of the sum on the left hand side of (\ref{octaeq2}) is either $1$ or $-1$, the numbers of $1$'s and $-1$'s are equal. It implies that $t-1$ is even, or $t$ is odd.

\medskip
We now assume that $\mathcal{O}$ has $t\geq 3$ middle layers, i.e.
\[\mathcal{O}:=\mathcal{O}^{(1)}_{w-1}(2h_1-1;\ \overline{d}_1,\dotsc, \overline{d}_t; 2h_3-1),\]
for some odd $t\geq 3$. Next, we show a way to construct a type-1 quasi-octagon $\mathcal{O}'$ having $t-2$ middle layers so that the statement of the theorem is true for $\mathcal{O}$ if and only if it is true for $\mathcal{O}'$.

\begin{figure}\centering
\includegraphics[width=12cm]{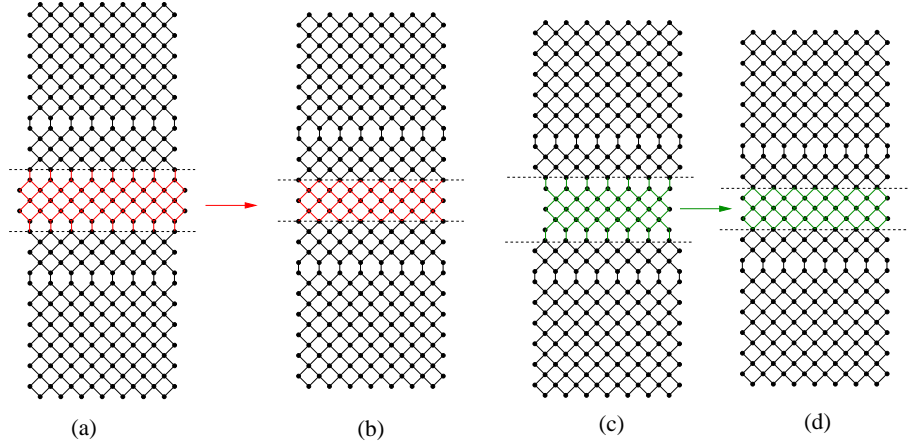}
\caption{Removing convex and concave terms from the sequence the widths of the middle layers.}
\label{octa4}
\end{figure}

 We can find two consecutive terms having opposite signs in the sequence $\{(b_{i}-b_{i+1})\}_{i=1}^{t-1}$ (otherwise, the terms are all $1$ or all $-1$; so the sum of them is different from $0$, a contradiction to (\ref{octaeq2})). Assume that $(b_{j-1}-b_{j})$ and $(b_{j}-b_{j+1})$ are such two terms, so $b_{j}$ is a convex term or a concave term of the sequence $\{b_i\}_{i=1}^{t}$.

Suppose first that $b_{j}$ is a convex term, i.e. $b_{i-1}=b_j-1=b_{j+1}$. Let $B$ be graph obtained from the dual graph of the $j$-th middle layer by appending vertical edges to its topmost and bottommost vertices. In this case, $B$  is isomorphic to the graph ${}_{|}^{|}AR_{a_{j},b_{j}}$ (see the graph between two dotted lines in Figure \ref{octa4}(a)). Apply the transformation in Lemma  \ref{OTrans}(1) to replace $B$ by $OR_{a_j,b_j-1}$. This ways, the dual graph $G$ of $\mathcal{O}$ is transformed into the dual graph $G''$ of the type-1 quasi-octagon
\begin{equation}\label{newoc}
\mathcal{O}':=\mathcal{O}^{(1)}_{w-1}(2h_1-1; d_1,\dotsc,d_{j-2}, d_{j-1}+d_{j}+d_{j+1},d_{j+2},\dotsc, d_{t}; 2h_3-1)
\end{equation}
having $t-2$ middle layers (see Figures \ref{octa4}(a) and (b) for an example). By Lemma \ref{OTrans}(a), we have
\begin{equation}\label{oceq5}
\M(\mathcal{O})=2^{a_{j}}\M(\mathcal{O}').
\end{equation}
Intuitively, we have just combined three middle layers (the $(j-1)$-th, the $i$-th and the $(j+1)$-th middle layers) of $\mathcal{O}$ into the $(j-1)$-th middle layer of $\mathcal{O}'$, and leave other parts of the region unchanged. The height of the $(j-1)$-th middle layer of $\mathcal{O}'$ is $a_{j-1}+a_{j}+a_{j+1}$, and the width of its is $b_{j-1}$. Thus
\begin{equation}
h_2=\sum_{i=1}^ta_i=\sum_{i=1}^{j-2}a_i+(a_{j-1}+a_{j}+a_{j+1})+\sum_{i=j+2}^{t-2}a_i=h'_2.
\end{equation}
Moreover, the two regions $\mathcal{O}$ and $\mathcal{O}'$ have the same upper and lower parts, so $h_1=h'_1$, $h_3=h'_3$, $\mathcal{C}_1=\mathcal{C}'_1$, $\mathcal{C}_3=\mathcal{C}'_3$ and $w=w'$ (the primed symbols refer to $\mathcal{O}'$ and  denote quantities corresponding to their unprimed counterparts of $\mathcal{O}$). This and the equality (\ref{oceq5}) imply that the statement in part (a) holds for $\mathcal{O}$ if and only if it holds for $\mathcal{O}'$.

Suppose that the condition $h_1+h_3=w+h_2$ in part (b) of the theorem holds. Note that the $i$-th middle layer of $\mathcal{O}$ has $a_i$ rows of $b_i+1$ white regular cells, for $i=1,2,\dotsc,t$. Thus, the number of white regular cells in the middle part of  $\mathcal{O}$ is given by
\begin{equation}\label{middnumber}
\mathcal{C}_2=\sum_{i=1}^{t}a_i(b_{i}+1).
\end{equation}
Similarly, the number of white regular cells in the middle part of $\mathcal{O}'$ is given by
\[\mathcal{C}'_2=\sum_{i=1}^{j-2}a_i(b_{i}+1)+(a_{j-1}+a_{j}+a_{j+1})(b_{j-1}+1)+\sum_{i=j+2}^ta_i(b_{i}+1).\]
Thus,
\begin{align}
\mathcal{C}_2-\mathcal{C}'_2&=a_{j-1}(b_{j-1}+1)+a_{j-1}(b_{j}+1)+a_{j-1}(b_{j+1}+1)\notag\\
&-(a_{j-1}+a_{j}+a_{j+1})(b_{j-1}+1)\notag\\
&=a_{j},
\end{align}
because we are assuming $b_{j-1}=b_{j}-1=b_{j+1}$. Therefore,
\begin{align}\label{oceq6}
&2^{\mathcal{C}'_1+\mathcal{C}'_2+\mathcal{C}'_3-h'_1(2w'-h'_1+1)/2-h'_2(2w'-h'_2+1)/2-h'_3(2w'-h'_3+1)/2}\notag\\
&\times2^{\binom{h'_1+h'_2}{2}-\binom{h'_2+h'_3}{3}}\mathcal{P}(h'_2+1,1,h'_1+h'_3,h'_1-h'_3,w'+h'_2-1)\notag\\
&=2^{\mathcal{C}_1+(\mathcal{C}_2-a_j)+\mathcal{C}_3-h_1(2w-h_1+1)/2-h_2(2w-h_2+1)/2-h_3(2w-h_3+1)/2}\notag\\
&\times2^{\binom{h_1+h_2}{2}-\binom{h_2+h_3}{3}}\mathcal{P}(h_2+1,1,h_1+h_3,h_1-h_3,w+h_2-1).
\end{align}
By the equalities (\ref{oceq5}) and (\ref{oceq6}), the statement of part (b) holds for $\mathcal{O}$ if and only if it holds for $\mathcal{O}'$.

\medskip

The case of concave $b_{j}$ is perfectly analogues to the case treated above. The only difference is that we use the transformation in Lemma \ref{OTrans}(b) (in reverse) instead of the transformation in Lemma \ref{OTrans}(a) (see Figures \ref{octa4}(c) and (d) for an example). The resulting region is still the quasi-octagon $\mathcal{O}'$ defined as in (\ref{newoc}); and by Lemma \ref{OTrans}(b), we have now
\begin{equation}\label{oceq7}
 \M(\mathcal{O})=2^{-a_{j}}\M(\mathcal{O}')
\end{equation}
 and
\begin{equation}\label{oceq8}
\mathcal{C}_2-\mathcal{C}'_2=-a_{j}.
\end{equation}
Similar to the case of convex $b_{j}$, the statements in parts (a) and (b) hold for $\mathcal{O}$ if and only if they hold for $\mathcal{O}'$.

\medskip

Keep applying this process if the resulting quasi-octagon still has more than one middle layer. Finally, we get a quasi-octagon $\widetilde{\mathcal{O}}$ with only one middle layer so that the statement of theorem holds for $\mathcal{O}$ if and only if it holds for $\widetilde{\mathcal{O}}$. It means that, without loss of generality, we can assume that $t=1$. This finishes Step 2.

\medskip

STEP 3. \textit{Prove the theorem for $k=l=t=1$.}\\

 We have in this case  $\mathcal{O}=\mathcal{O}^{(1)}_{w-1}(2h_1-1;\ 2h_2;\ 2h_3-1).$

 It is easy to see that the numbers of black cells and white cells must be the same if the region $\mathcal{O}$ admits tilings. One can check easily that the balancing condition between black and white cells in the region requires
\begin{equation}\label{balancingoc}
h_1+h_3=w+h_2.
\end{equation}
In particular, this implies the statement in part (a) of the theorem.

Assume that (\ref{balancingoc}) holds, and let
\begin{equation}
\mathcal{O}'':=\mathcal{O}^{(1)}_{w-h_1}(\underbrace{1,\dotsc,1}_{h_1};\ 2h_2;\ \underbrace{1,\dotsc, 1}_{h_3}).
\end{equation}
Apply the equality (\ref{octaeq1}) to the region $\mathcal{O}'$, we get
\begin{align}\label{octaeq3}
\M(\mathcal{O}'')&=2^{\mathcal{C}''_1-h''_1w''+\mathcal{C}''_3-h''_3w''}\M(\mathcal{O}^{(1)}_{w-1}(2h_1-1;\ 2h_2;\ 2h_3-1))\\
&=2^{\mathcal{C}''_1-h''_1w''+\mathcal{C}''_3-h''_3w''}\M(\mathcal{O}),
\end{align}
 where the double-primed symbols refer to the regions $\mathcal{O}''$ and denote quantities corresponding to their unprimed counterparts of $\mathcal{O}$.
One readily sees that $h''_1=h_1$, $w''=w$, and $h''_3=h_3$. Moreover, the numbers of black regular cells in the upper and lower parts of $\mathcal{O}''$ are given by
\begin{equation}
\mathcal{C}''_1= \sum_{i=0}^{h_1-1} (w-i)=h_1w-\frac{h_1(h_1-1)}{2}
\end{equation}
and
\begin{equation}
\mathcal{C}''_3=\sum_{i=0}^{h_3-1} (w-i)=h_3w-\frac{h_3(h_3-1)}{2}.
\end{equation}
Therefore, by (\ref{octaeq3}), we have
\begin{equation}\label{octaeq4}
\M(\mathcal{O}'')=2^{-\frac{h_1(h_1-1)}{2}-\frac{h_3(h_3-1)}{2}}\M(\mathcal{O})
\end{equation}
(see Figures \ref{octafinal}(a) and (b) for an example).

\begin{figure}\centering
\includegraphics[width=10cm]{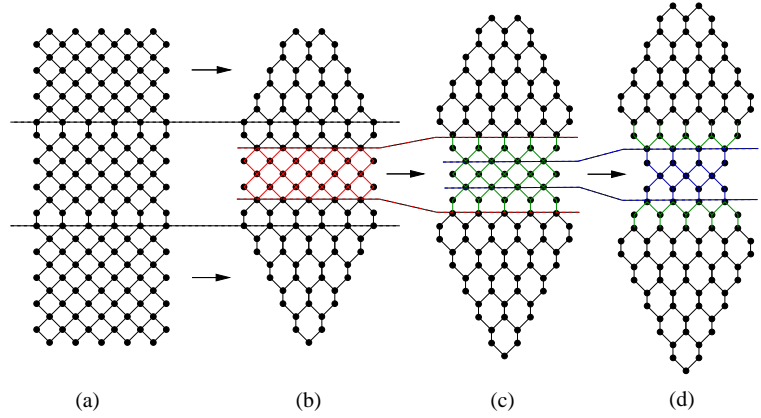}
\caption{Transforming process for the middle layer in the case when $k=l=t=1$.}
\label{octafinal}
\end{figure}

Let $G''$ be the dual graph of $\mathcal{O}''$. Consider the transforming process illustrated in Figures \ref{octafinal}(b)--(d) as follows.
Let $B_1$ is the subgraph consisting of all $h_2-1$ rows of $w-1$ diamonds in the middle part of $G''$, i.e. $B_1$ is isomorphic to $AR_{h_2-1,w-1}$ (see the graph between two inner dotted line in Figure \ref{octafinal}(b)). Apply the transformation in Lemma \ref{OTrans}(b) to replace $B_1$ by the graph ${}_|^|OR_{h_2-1,w-2}$ (see Figures \ref{octafinal}(b) and (c)). Next, consider the subgraph $B_2$ of the resulting graph that consists of all $h_2-2$ rows of $w-2$ diamonds of the graph, so $B_2$ is isomorphic to $AR_{h_2-2,w-2}$ (see the graph between two inner dotted lines in Figure \ref{octafinal}(c)). Apply the transformation in Lemma \ref{OTrans}(b) again to transform $B_2$ into ${}_|^|OR_{h_2-2,w-3}$. Keep applying the process until all rows of diamonds in the resulting graph have been eliminated (i.e. this process stops after $h_2-1$ steps).
%
%
 Denote by  $\widetilde{G}$ the final graph of the process, by Lemma \ref{OTrans}(b), we get
\begin{equation}\label{octaeq5}
\M(\mathcal{O}'')=\M(G'')=2^{\frac{h_2(h_2-1)}{2}}\M(\widetilde{G}).
\end{equation}
By (\ref{octaeq4}), we obtain
\begin{equation}\label{octaeq6}
\M(\mathcal{O})=2^{\frac{h_1(h_1-1)}{2}+\frac{h_2(h_2-1)}{2}+\frac{h_3(h_3-1)}{2}}M(\widetilde{G}).
\end{equation}

Moreover, $\widetilde{G}$ is exactly the graph $H_{h_1+h_2,w-h_1,h_2,h_2}^{h_2+h_3,w-h_3}$ in Lemma \ref{massholelem} (see Figure \ref{octafinal}(d)). By Lemma \ref{massholelem}, we have
\begin{align}\label{octaeq7}
\M(\widetilde{G})=&2^{-\frac{(h_1+h_2)(h_1+h_2-1)}{2}-\frac{(h_2+h_3)(h_2+h_3-1)}{2}}\notag\\
&\times \M(AR_{h_1+2h_2+h_3-1,w+h_2}^{|h_1-h_3|}(A)),
\end{align}
where $A=\{1,\dots,h_2\}\cup\{w+1,\dots,w+h_2\}$. Thus,  (\ref{mainoc'}) follows from the equality (\ref{octaeq7}) and Lemma \ref{schur3}. This finishes our proof for type-1 quasi-octagons.

\medskip

\quad\textit{Case 2. $\mathcal{O}$ is of type 2.}

\medskip

\begin{figure}\centering
\includegraphics[width=6cm]{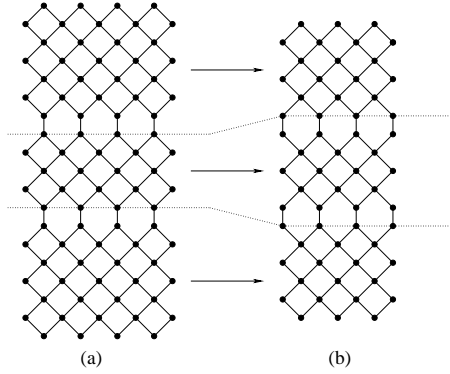}
\caption{Obtaining the dual graph of a type-1 quasi-octagon form the dual graph of a type-2 quasi-octagon.}
\label{octa2n}
\end{figure}

Repeat the argument in Steps 1 and 2 of Case 1, we can assume that $k=l=t=1$. We only need to prove the theorem for the type-2 quasi-octagon $\mathcal{O}:=\mathcal{O}^{(2)}_{w}(2h_1;\ 2h_2;\  2h_3)$. The dual graph $G$ of $\mathcal{O}$ can be divided into three subgraphs ${}_{|}AR_{h_1,w}$, ${}^{|}AR_{h_2,w}$, and $AR_{h_3,w}$ (in order from top to bottom) as in Figure \ref{octa2n}(a), for $w=4$, $h_1=3$, $h_2=2$ and $h_3=4$. Apply the transformation in Lemma \ref{T1} separately to replace ${}_{|}AR_{h_1,w}$ by the graph $AR_{h_1-\frac{1}{2},w-1}$, and ${}^{|}AR_{h_3,w}$ by the graph $AR_{h_3-\frac{1}{2},w-1}$ flipped over a horizontal line. Next, apply Lemma \ref{OTrans}(b) to transform $AR_{h_2,w}$ into ${}_{|}^{|}OR_{h_2,w-1}$. This way, the $G$ gets transformed in to the dual graph of the type-1 quasi-octagon $\mathcal{O}^{(1)}_{w-1}(2h_1-1;\ 2h_2;\  2h_3-1)$ (illustrated in Figure \ref{octa2n}(b)); and by Lemmas \ref{T1} and \ref{OTrans},
we obtain
\begin{equation}\label{oceq10}
\M(\mathcal{O})=2^{h_1+h_2+h_3}\M(\mathcal{O}^{(1)}_{w-1}(2h_1-1;\ 2h_2;\  2h_3-1)).
\end{equation}
Thus, both parts (a) and (b) of the theorem are reduced to the Case 1 treated above.

\medskip

\quad\textit{Case 3. $\mathcal{O}$ is of type 3.}

\medskip

Since $|BG|=|CF|$ and since $\mathcal{O}$ is of type 3, we have
\[-1=(w-1)-w=b_{1}-b_{t}=\sum_{i=1}^{t-1}(b_i-b_{i+1}).\] Since $|b_{i}-b_{i+1}|=1$, for $i=1,2,\dotsc,t-1$, the number of 1 terms and is one less than the number of $-1$ terms in the sequence $\{(b_i-b_{i+1})\}_{i=1}^{t-1}$. Thus, $t-1$ is odd, or $t$ is \textit{even} (as opposed to being odd in Case 1). By arguing similarly to Case 1, we can assume that $k=l=1$ and $t=2$.

The quasi-octagon $\mathcal{O}$ is now
\[\mathcal{O}^{(2)}_{w-1}(2h_1-1;\ 2x, 2y;\ 2h_3),\]
 where $x$ and $y$ are two positive integers such that $x+y=h_2$ (see Figure \ref{octa3}(a) for an example with $h_1=5$, $h_2=6$, $x=2$, $y=2$, $w=7$). Denote by $G$ the dual graph of $\mathcal{O}$  as usual. Apply Vertex-splitting Lemma to all topmost vertices of the lower part of $G$, and  divide the resulting graph by three horizontal dotted lines as in Figure \ref{octa3}(b). Apply the transformation in Lemma \ref{T1} to the bottom part, and the transformation in Lemma \ref{OTrans}(a) to the second part from top (sees Figures \ref{octa3}(b) and (c)). This way, $G$ is transformed into the dual graph of the type-1 quasi-octagon
 \[\mathcal{O}^{(1)}_{w-1}(2h_1-1;\ 2h_2;\ 2h_3-1)\]
  (see Figure \ref{octa3}(c)); and, by Lemmas \ref{T1} and \ref{OTrans}, we obtain
\begin{equation}\label{oceq11}
\M(\mathcal{O})=2^{h_2+h_3}\M(\mathcal{O}^{(1)}_{w-1}(2h_1-1;\ 2h_2;\ 2h_3-1)).
\end{equation}
Again, we get the statements of the theorem  from Case 1.

\begin{figure}\centering
\includegraphics[width=10cm]{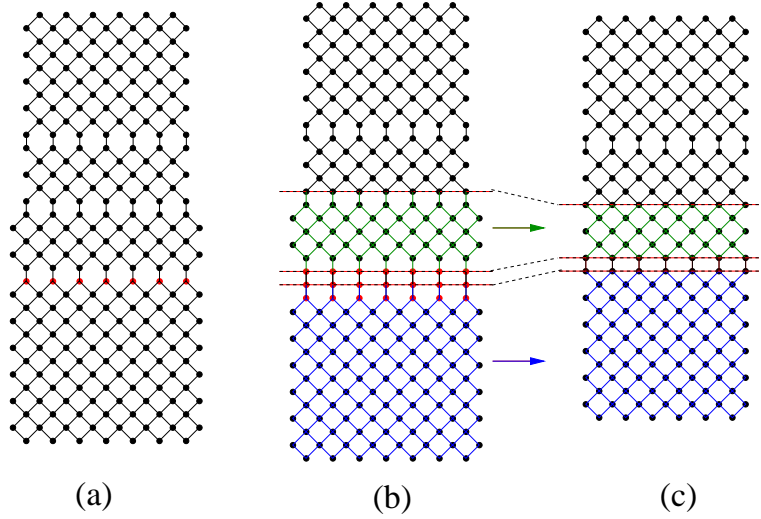}
\caption{Illustrating Case 3 of the proof of Theorem \ref{octagon1}.}
\label{octa3}
\end{figure}

\medskip

\quad\textit{Case 4. $\mathcal{O}$ is of type-4.}

\medskip

The type-3 quasi-octagon
\[\mathcal{O}^*:=\mathcal{O}^{(3)}_{|DE|}(d'_1,\dotsc,d'_l;\ \overline{d}_t,\dotsc, \overline{d}_1;\ d_1,\dotsc,d_k)\]
is obtained from our type-4 quasi-octagon
\[\mathcal{O}:=\mathcal{O}^{(4)}_{a}(d_1,\dotsc,d_k;\ \overline{d}_1,\dotsc, \overline{d}_t;\ d'_1,\dotsc,d'_l)\] by reflecting about $\ell$. Thus, this case follows from Case 3.
\end{proof}

Theorem \ref{octagon1} require $w> \max(h_1,h_2,h_3)$. The following theorem gives the number of tilings of a quasi-octagon when  $w\leq \max(h_1,h_2,h_3)$.

\begin{thm}\label{octagon2} Let $a$, $d_1,\dotsc,d_k;$ $\overline{d}_1,\dotsc, \overline{d}_t;$ $d'_1,\dotsc,d'_l$ be positive integers, so that for which the region $\mathcal{O}:=\mathcal{O}_{a}(d_1,\dotsc,d_k;\ \overline{d}_1,\dotsc, \overline{d}_t;\ d'_1,\dotsc,d'_l)$ is a quasi-octagon satisfying the  balancing condition (\ref{balancingoc}), having the heights $h_1,h_2,h_3$, and having both widths equal $w$. Let $\mathcal{C}_1$ be the numbers of black regular cells in the upper part, $\mathcal{C}_2$ be the number of white regular cells in the middle part, and $\mathcal{C}_3$ be the number of black regular cells in the lower part of the region.

(a) If $w<\max(h_1,h_2,h_3)$, then $\M(\mathcal{O})=0$.

(b) If $h_2=w$ (so, $h_1=h_2=h_3=w$ by (\ref{balancingoc})), then
\begin{equation}\label{specialoc1}
\M(\mathcal{O})=2^{\mathcal{C}_1+\mathcal{C}_2+\mathcal{C}_3-h_1(2w-h_1+1)/2-h_2(2w-h_2+1)/2-h_3(2w-h_3+1)/2}.
\end{equation}

(c) If $h_1= w$ and $h_2=h_3<w$, then
\begin{equation}\label{specialoc2}
\M(\mathcal{O})=2^{\mathcal{C}_1+\mathcal{C}_2+\mathcal{C}_3-h_1(2w-h_1+1)/2-h_2(2w-h_2+1)/2-h_3(2w-h_3+1)/2}\M(H_{h_2,w-h_2,h_2}).
\end{equation}

(d) The conclusion of part (2) is still true when $h_3= w$ and $h_1=h_2<w$.
\end{thm}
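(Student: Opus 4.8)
The plan is to reuse, essentially verbatim, the reduction apparatus developed for Theorem~\ref{octagon1}, and to do genuinely new work only where the hypotheses force a degeneration. First I would apply the type reduction (the passages Case~2,3,4 $\to$ Case~1, via Lemmas~\ref{T1} and~\ref{OTrans}) together with Steps~1 and~2 of that proof (Lemmas~\ref{masstransform2} and~\ref{OTrans}). Each of these is a purely local subgraph replacement and nowhere uses the inequality $w>\max(h_1,h_2,h_3)$, so all of them remain valid in the present regime and multiply $\M(\mathcal{O})$ by explicit powers of $2$. After normalizing and assuming $h_1\ge h_3$ (reflecting about $\ell$ if necessary), the problem reduces to computing $\M$ of the core region $\mathcal{O}^{(1)}_{w-1}(2h_1-1;\ 2h_2;\ 2h_3-1)$, with the balancing identity $h_1+h_3=w+h_2$ in force.

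For part (a) I would rule out tilings by a colour count. Since (\ref{balancingoc}) holds and $h_1\ge h_3$, the assumption $w<\max(h_1,h_2,h_3)$ actually forces $h_1>w$: if instead $h_1,h_3\le w$ then $h_2=h_1+h_3-w\le w$, contradicting $\max>w$. I would then take $H$ to be the upper part of the core dual graph, which after Lemma~\ref{masstransform2}(b) is the baseless Aztec rectangle $AR_{h_1-\frac12,\,w-1}$, viewed as an induced subgraph whose only edges to the rest of the graph leave through its bottom vertices, all of one colour class; this is precisely the separating condition of Lemma~\ref{graphsplitting}. A direct count of the two colour classes of $H$ shows that when $h_1>w$ the class that must be matched internally strictly outnumbers the class that can be matched downward, so Lemma~\ref{graphsplitting}(b) gives $\M=0$, hence $\M(\mathcal{O})=0$.

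For parts (b) and (c) I would run Step~3 of Theorem~\ref{octagon1} as far as it goes: apply Lemma~\ref{OTrans}(b) repeatedly to the middle rows of the core graph so as to reach the connected-sum graph $\widetilde G=H_1\#H_2$, where $H_1$ is the dual graph of the hexagon with sides $(w-h_1,\,h_1,\,h_2,\,w-h_2,\,h_2,\,h_1)$ and $H_2$ that of the hexagon with sides $(w-h_2,\,h_2,\,h_3,\,w-h_3,\,h_3,\,h_2)$. The new phenomenon is the collapse of $H_1$ when $h_1=w$: its north side $w-h_1$ becomes $0$. In part (b), where $h_1=h_2=h_3=w$, both $H_1$ and $H_2$ degenerate to the parallelogram $H(0,w,w)$, which has a unique lozenge tiling; hence $\M(\widetilde G)=1$ and, after restoring the accumulated power of $2$ and simplifying, formula (\ref{specialoc1}) follows. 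In part (c), where $h_1=w>h_2=h_3$, only $H_1$ collapses, while $H_2$ becomes the symmetric hexagon $H(w-h_2,h_2,h_2)$, whose tiling number equals $\M(H_{h_2,w-h_2,h_2})$ by the symmetry of MacMahon's product; I would show that the degenerate $H_1$ contracts under forced edges so that $\M(\widetilde G)=\M(H_2)=\M(H_{h_2,w-h_2,h_2})$, giving (\ref{specialoc2}). Part (d) is then the mirror image of part (c) under reflection about $\ell$, exactly as Case~4 was deduced from Case~3 in Theorem~\ref{octagon1}.

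The hardest step is the boundary degeneration itself, because the machinery of Step~3 is literally unavailable when $h_1=w$: forming the auxiliary region $\mathcal{O}''$ requires the parameter $a=w-h_1\ge 1$, and Lemma~\ref{massholelem} (through the semihexagon $S_1$ in its proof, whose bottom would have to surrender $2h_2$ nonexistent vertices) presupposes $b=w-h_1>0$. I therefore expect the real work to be in installing a boundary-safe version of the reduction---either by transforming only the genuine middle rows via Lemma~\ref{OTrans}(b) while leaving the already-reduced corner piece $AR_{w-\frac12,\,w-1}$ in place and then exhibiting the forced staircase that sweeps it away, or by proving a $b=0$ analogue of Lemma~\ref{massholelem} in which the degenerate factor is evaluated directly through Lemma~\ref{Hexdent} (the all-removed semihexagon contributing the empty product $1$). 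A secondary but unavoidable check is the power-of-$2$ bookkeeping: I must confirm that the reduction exponents collected on the way to the core, together with $\M(\widetilde G)\in\{1,\ \M(H_{h_2,w-h_2,h_2})\}$, assemble into precisely the exponent displayed in (\ref{specialoc1}) and (\ref{specialoc2}), so that the degenerate formulas match Theorem~\ref{octagon1} on the nose at the boundary.
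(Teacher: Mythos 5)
Your part (a) coincides with the paper's (a one-line appeal to Lemma \ref{graphsplitting}(b) after the standard reductions), but for parts (b)--(d) you take a genuinely different route from the paper, and the route you choose is the one that runs into the degeneration you yourself flag. The paper does not push Step~3 of Theorem \ref{octagon1} to the boundary at all. Instead it observes that when a height equals $w$ the corresponding part of the dual graph, stripped of its boundary row(s) of vertices, is an induced copy of the Aztec diamond graph of order $w-1$ satisfying \emph{both} conditions of Lemma \ref{graphsplitting}(a), so it simply factors off, contributing $2^{w(w-1)/2}$ by the Aztec diamond theorem. In case (b) all three parts factor off this way, the residue is a union of forced vertical edges, and (\ref{specialoc1}) drops out with no hexagon degeneration and no appeal to Lemma \ref{massholelem}. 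In case (c) only the upper part factors off; after deleting the resulting forced vertical edges the remainder is recognized as the dual graph of the symmetric quasi-hexagon $H_{w-1}(2h_2-1;\,2h_3-1)$ of the prequel, and Theorem 2.2 of \cite{Tri} is quoted to produce the factor $\M(H_{h_2,w-h_2,h_2})$. Part (d) is the same reflection argument you give.

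The substantive issue with your proposal is that its central step is left as a to-do. You correctly diagnose that the Step-3 machinery breaks when $h_1=w$ (the region $\mathcal{O}''$ needs $a=w-h_1\geq 1$, and Lemma \ref{massholelem} presupposes the semihexagon $S_1$ has a nonempty bottom to dent), and you propose two possible repairs, but you carry out neither; you also do not do the exponent bookkeeping you acknowledge is ``unavoidable.'' So as written the argument for (b) and (c) has a gap exactly at the point where new work is required. The fix is not to repair the degenerate machinery but to abandon it: replace your hexagon-collapse analysis with the Graph Splitting Lemma factorization above, which is elementary, requires no boundary-safe variant of anything, and makes the power-of-$2$ accounting immediate since each split-off Aztec diamond of order $w-1$ contributes exactly the term $\mathcal{C}_i-h_i(2w-h_i+1)/2=w(w-1)/2$ to the exponent.
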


\begin{proof}
(a) By the same argument in Theorem \ref{octagon1}, we can assume that $k=l=t=1$ for type-1 and type-2 quasi-octagons, and $k=l=1$ and $t=2$ for type-3 and type-4 quasi-octagons. Then this part follows directly from Graph-splitting Lemma \ref{GS}, part (b).

(b) Suppose that $\mathcal{O}$ is of type-1. By the argument in Theorem \ref{octagon1}, we can assume that $k=l=t=1$ (then the general case can be obtained by induction on $t$). The quasi-octagon $\mathcal{O}$ is now $\mathcal{O}^{(1)}_{w-1}(2h_1-1;2h_2;\ 2h_3-1)$. Let $G$ be the dual graph of $\mathcal{O}$ as usual. 

Define $G_1$ to be the graph obtained from the upper part of $G$ by removing all bottom vertices, $G_2$ to be the graph obtained from the middle part of $G$ by removing all top and bottom vertices, and $G_3$ to be the graph obtained from the lower part of $G$ by removing all top vertices. Since $h_1=h_2=h_3=w$, the graphs $G_1$, $G_2$ and $G_3$ are isomorphic to the dual graph of the Aztec diamond of order $w-1$, and satisfy the condition in part (a) of the Graph-splitting Lemma. Thus,
\begin{align}
\M(G)&=\M(G_1)\M(G_2)\M(G_3)\M(G')\\
&=2^{3w(w-1)/2}\M(G'),
\end{align}
where $G'$ is the graph obtained from $G$ by removing $G_1$, $G_2$ and $G_3$. One readily sees that $G'$ consists of $2q$ disjoint vertical edges, so $\M(G')=1$. Moreover,  we have $h_1=h_2=h_3=w$ and $\mathcal{C}_1=\mathcal{C}_2=\mathcal{C}_3=w^2$, then thus
\begin{align}
\M(G)&=2^{3w(w-1)/2}\notag\\
&=2^{\mathcal{C}_1+\mathcal{C}_2+\mathcal{C}_3-h_1(2w-h_1+1)/2-h_2(2w-h_2+1)/2-h_3(2w-h_3+1)/2},
\end{align}
which implies (\ref{specialoc1}) and finishes our proof for type-1 quasi-octagons.

Finally, the equalities (\ref{oceq10}) and (\ref{oceq11}) in the proof of Theorem \ref{octagon1} are still true in this case. Thus, the case when $\mathcal{O}$ is of type 2, 3 or 4 can be reduced to the case treated above.

(c) Similar to part (b), we only need to consider the case when $\mathcal{O}$ is of type 1 and has $k=l=t=1$. Since $h_1=w$, the graph $G_1$ defined in part (b) is isomorphic to the dual graph of the Aztec diamond of order $w-1$ (so $\mathcal{C}_1=w^2$) and satisfies the two conditions in the part (a) of the Graph Splitting Lemma \ref{graphsplitting}, thus
\[\M(G)=2^{w(w-1)/2}\M(\overline{G})=2^{\mathcal{C}_1-h_1(2w-h_1+1)/2}\M(\overline{G}),\]
where $\overline{G}$ is the graph obtained from $G$ by removing $G_1$.  Remove all $w$ vertical  forced edges in top of $\overline{G}$, we get precisely the dual graph of the symmetric quasi-hexagon $H_{w-1}(2h_2-1;\ 2h_3-1)$ (see \cite{Tri}); and by Theorem 2.2  in \cite{Tri}, we have
\[M(\overline{G})=2^{\mathcal{C}_2-h_2(2w-h_2+1)/2+\mathcal{C}_3-h_3(2w-h_3+1)/2}\M(H_{h_2,w-h_2,h_2}).\]
This implies (\ref{specialoc2}).

(4) Part (4) can be reduced to part (3) by considering the region $\mathcal{O}':=\mathcal{O}^{(1)}_{w-1}(2h_3-1;2h_2;\ 2h_1-1)$ that is obtained by reflecting $\mathcal{O}=\mathcal{O}^{(1)}_{w-1}(2h_1-1;2h_2;\ 2h_3-1)$ over $\ell'$.
\end{proof}

As mentioned before, we do \textit{not} have a simple product formula for the number of tilings of a quasi-octagon when its widths are not equal, i.e. $w_1=|BG|\not=|CF|=w_2$. However, we have a sum formula for the number of tilings in this case.

Let $S=\{s_1,s_2,\dotsc,s_k\}$ be a set of integers, we define the operation \[\Delta (S):=\prod_{1\leq i<j\leq k}(s_j-s_i).\] For any positive integer $n$, we denote by $[n]$ the set the first $n$ positive integers $\{1,2,3,\dotsc,n\}$. Let $x$ be a number and $A$ be a set of numbers, we define $x+A:=\{y+x|y\in A\}$.

\begin{thm}\label{octagon3}
Let $a$, $d_1,$ $\dotsc$, $d_k$; $\overline{d}_1$, $\dotsc$, $\overline{d}_t$; $d'_1$, $\dotsc$, $d'_l$ be positive integers for which the region
\[\mathcal{O}b:=\mathcal{O}_{a}(d_1,\dotsc,d_k;\ \overline{d}_1,\dotsc, \overline{d}_t;\ d'_1,\dotsc,d'_l)\]
is a quasi-octagon having the upper, the middle, the lower heights $h_1,$ $h_2,$ $h_3$, respectively, and the upper and the lower widths $w_1,w_2$, respectively. Assume in addition that $w_1>w_2$, $h_1<w_1$, $h_2<w_2$, and $h_3<w_2$. Let $\mathcal{C}_1$ be the numbers of black regular cells in the upper part, $\mathcal{C}_2$ the number of white regular cells in the middle part, and $\mathcal{C}_3$ the number of black regular cells in the lower part of the region. Then

(a) If $h_1+h_3\not=w_1+h_2$, then $\M(\mathcal{O})=0$.

(b) Assume that $h_1+h_3=w_1+h_2$, then
\begin{align}\label{mainoc}
\M(\mathcal{O})&=2^{\mathcal{C}_1+\mathcal{C}_2+\mathcal{C}_3-h_1(2w-h_1+1)/2-h_2(2w-h_2+1)/2-h_3(2w-h_3+1)/2}\notag\\
&\times \sum_{(A,B)}\frac{\Delta([h_2+2w_1-w_2]\setminus (h_2+w_1-w_2+B))}{\Delta([h_1+h_2+w_1-w_2])}\frac{\Delta([w_2+h_2]\setminus (h_2+A))}{\Delta([h_2+h_3])},
\end{align}
where the sum is taken over all pairs of disjoint sets $A$ and $B$ so that $A\cup B=[w_2-h_2]$, $|A|=w_2-h_3$ and $|B|=w_1-h_1$.
\end{thm}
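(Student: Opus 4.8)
The plan is to follow the three-step reduction used for Theorem \ref{octagon1} as far as it will go, and then to replace the single Krattenthaler evaluation (Lemma \ref{schur3}), which is precisely what forces $w_1=w_2$, by a summation over the ways a perfect matching can cross the interface between the upper-middle and the lower-middle portions of the dual graph. I would first dispose of part (a): after the reductions below, the region is, up to removing forced edges, a bipartite graph to which the Graph Splitting Lemma \ref{graphsplitting}(b) applies, and a direct count of black versus white cells shows that the surplus of one color over the other equals $h_1+h_3-(w_1+h_2)$; when this quantity is nonzero the region is unbalanced and $\M(\mathcal{O})=0$.

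For part (b), Step 1 is verbatim the argument of Step 1 in the proof of Theorem \ref{octagon1}: applying the composite transformation of Lemma \ref{masstransform2}(b) separately to the upper and lower parts collapses them to single layers, so that we may assume $k=l=1$, at the cost of an explicit, tracked power of $2$. Step 2 again uses Lemma \ref{OTrans} to eliminate convex and concave terms from the width sequence $\{b_i\}$ of the middle layers. The one structural difference from Theorem \ref{octagon1} is that now $\sum_{i}(b_i-b_{i+1})=b_1-b_t=w_1-w_2>0$, so the elimination of convex/concave pairs cannot terminate at a single middle layer; instead it stops at a strictly monotone staircase descending from width $w_1$ to width $w_2$. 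This staircase is the combinatorial origin of the unequal-width behavior, and each elimination again contributes a tracked power of $2$.

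Step 3 applies the row-stripping transformation of Lemma \ref{OTrans}(b) to the staircase middle, exactly as in Step 3 of Theorem \ref{octagon1}, turning the dual graph into a connected sum $H_1\#H_2$ of two dented semihexagon duals, one of width governed by $w_1$ attached above the cut and one of width governed by $w_2$ attached below it. In the equal-width case Lemma \ref{massholelem} fuses these two semihexagons into a single Aztec rectangle with holes in a single row, which Lemma \ref{schur3} evaluates in closed product form; here the interface widths no longer match and no such fusion is available. Instead I would evaluate $\M(H_1\#H_2)$ directly. Every perfect matching assigns each identified interface vertex to an edge lying in $H_1$ or in $H_2$; conditioning on which interface vertices are matched into the lower piece splits the count as a product $\M(H_1\ \text{dented})\cdot\M(H_2\ \text{dented})$, and the balancing requirement forced on each factor (via Lemma \ref{graphsplitting}) pins the number of positions removed from each side to be exactly $|A|=w_2-h_3$ and $|B|=w_1-h_1$, with $A\cup B=[w_2-h_2]$. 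Applying Lemma \ref{Hexdent} to each factor then produces precisely the two $\Delta$-ratios appearing in (\ref{mainoc}), and summing over the admissible disjoint pairs $(A,B)$ yields the stated sum.

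The hard part will be this interface analysis at the end of Step 3: one must identify the interface vertices correctly, verify that conditioning on their matching genuinely decouples $H_1$ from $H_2$ (which is where the separating condition of Lemma \ref{graphsplitting} has to be re-checked for each conditioned subgraph), and pin down the index shifts so that Lemma \ref{Hexdent} reproduces the ratios exactly, namely the appearance of $h_2+A$ against the ground set $[w_2+h_2]$ in one factor and of $h_2+w_1-w_2+B$ against $[h_2+2w_1-w_2]$ in the other, with normalizations $\Delta([h_2+h_3])$ and $\Delta([h_1+h_2+w_1-w_2])$. By contrast, Steps 1 and 2 and the accumulation of all the intermediate powers of $2$ into the prefactor of (\ref{mainoc}) are routine adaptations of the equal-width proof and should require no new ideas.
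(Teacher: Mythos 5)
Your proposal follows essentially the same route as the paper: reduce to $k=l=1$, monotonize the middle-layer width sequence via Lemma \ref{OTrans}, arrive at a connected sum of two dented semihexagon dual graphs, and then evaluate it by conditioning each of the $w_2-h_2$ interface vertices to be matched upward or downward, applying Lemma \ref{Hexdent} to each resulting factor and summing over the admissible pairs $(A,B)$ --- which is exactly the paper's final interface argument. The only ingredient you gloss over is that stripping the monotone staircase requires a one-sided variant of Lemma \ref{OTrans} (the paper's Lemma \ref{T6}, used to funnel the heights of all middle layers into the narrowest one before forming the semihexagons), but the paper itself treats this as a routine modification.
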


The following transformation can be proved similarly to Lemma \ref{OTrans}, and will be employed in the proof of Theorem \ref{octagon3}.
\begin{figure}\centering
\includegraphics[width=6cm]{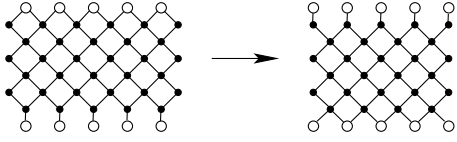}
\caption{Illustration of the transformation in Lemma \ref{T6}.}
\label{newoctatran}
\end{figure}

\begin{lem}\label{T6}
Let $G$ be a graph, and $p,q$ two positive integers. Assume that $\{v_1,v_2,$ $\dotsc,v_{2q}\}$ be an ordered set of vertices of $G$. Then
\begin{equation}
\M({}_|AR_{p,q} \#G)=2^{p}\M({}^|OR_{p,q-1} \#G),
\end{equation}
where ${}_|AR_{p,q}$ is defined as in Lemma \ref{T1}, and ${}^|OR_{p,q-1}$ is the graph obtained from $OR_{p,q-1}$ by appending $q$ vertical edges to its top vertices; and where the connected sum acts on $G$ along $\{v_1,v_2,\dotsc,v_{2q}\}$, and on ${}_|AR_{p,q}$ and ${}^|OR_{p,q-1}$ along their  $q$ top vertices ordered from left to right, then along their $q$ bottom vertices ordered  from left to right (see Figure \ref{newoctatran} for the case $p=2$ and $q=5$).
\end{lem}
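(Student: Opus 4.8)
The plan is to imitate the urban-renewal argument used for Lemma \ref{OTrans}, adapting the boundary bookkeeping to the asymmetric placement of the appended vertical edges (on the bottom of the Aztec rectangle, on the top of the odd Aztec rectangle). First I would form an auxiliary graph $G_1$ from ${}_|AR_{p,q}\#G$ by applying the Vertex-Splitting Lemma \ref{VS} at every vertex of $AR_{p,q}$ that is not the endpoint of one of the appended bottom vertical edges; in particular the $q$ top vertices, which are identified with $v_1,\dotsc,v_q$, get split, separating their neighbors inside $AR_{p,q}$ from their neighbors inside $G$. As in the proof of Lemma \ref{OTrans}(a), this produces a rectangular array of $pq$ shaded diamonds between the two connection rows, so that $\M({}_|AR_{p,q}\#G)=\M(G_1)$.

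Next I would apply the Spider Lemma \ref{spider} around all $pq$ diamonds; since all weights equal $1$, each move contributes the factor $xz+yt=2$ and reweights the newly created edges to $1/2$, so the spider moves together multiply the matching number by $2^{pq}$. A careful inspection of the two boundary rows then identifies the resulting graph. Along the bottom, the explicitly appended vertical edges become forced after the spider moves, exactly as in Lemma \ref{OTrans}(a); removing them together with their degree-one endpoints does not change the matching number and leaves a bottom row attached directly to $v_{q+1},\dotsc,v_{2q}$. Along the top, where no vertical edges were appended, the spider moves instead leave pendant vertical edges on the topmost vertices, which are precisely the appended edges of ${}^|OR_{p,q-1}$, exactly as in the (reverse) behavior of Lemma \ref{OTrans}(b). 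Hence the graph obtained is ${}^|OR^{1/2}_{p,q-1}\#G$, the graph ${}^|OR_{p,q-1}\#G$ with all non-pendant edges reweighted to $1/2$, and $\M(G_1)=2^{pq}\M({}^|OR^{1/2}_{p,q-1}\#G)$.

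Finally I would restore the weights to $1$ using the Star Lemma \ref{star} with factor $t=2$, applied to a set of $p(q-1)$ vertices chosen so that each reweighted edge is incident to exactly one of them; this multiplies the matching number by $2^{p(q-1)}$, giving $\M({}^|OR_{p,q-1}\#G)=2^{p(q-1)}\M({}^|OR^{1/2}_{p,q-1}\#G)$. Combining the two displays yields $\M({}_|AR_{p,q}\#G)=2^{pq}\cdot 2^{-p(q-1)}\M({}^|OR_{p,q-1}\#G)=2^{p}\M({}^|OR_{p,q-1}\#G)$, as claimed.

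I expect the main obstacle to be the boundary analysis in the second paragraph: one must verify rigorously that the top row genuinely yields the appended vertical edges of ${}^|OR_{p,q-1}$ while the bottom row yields forced edges that are deleted, that the left-to-right ordering of the surviving top and bottom vertices matches the prescribed ordering $v_1,\dotsc,v_{2q}$ of the connected sum, and that exactly $q$ forced edges and $q$ pendant edges arise, since it is this split that makes the powers of two collapse to the net factor $2^p$. Everything else is a routine transcription of the moves already carried out in the proof of Lemma \ref{OTrans}.
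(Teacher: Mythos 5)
Your proposal is correct and follows essentially the same route as the paper, which offers no separate argument for Lemma \ref{T6} beyond the remark that it ``can be proved similarly to Lemma \ref{OTrans}'': you carry out exactly that adaptation, with vertex splitting, $pq$ spider moves contributing $2^{pq}$, forced edges at the bottom versus surviving pendant edges at the top, and the Star Lemma at the $p(q-1)$ even-row vertices contributing $2^{-p(q-1)}$, for the net factor $2^{p}$. The boundary bookkeeping you flag as the main point to verify is indeed the only content beyond the proof of Lemma \ref{OTrans}, and your identification of which side behaves as in part (a) and which as in part (b) is the right one.
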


\begin{proof}[Proof of Theorem \ref{octagon3}]
Similar to the proof of Theorem \ref{octagon1}, we can assume that the quasi-octagon $\mathcal{O}$ is of type 1 and has $k=l=1$.

Denote by $K_j$ the dual graph of the $j$-th middle layer of the region. Assume that the $j$-th middle layer has the height and the width $a_j$ and $b_j$, respectively.

 We apply the process in the proof of Theorem \ref{octagon1} (Case 1, Step 3) using the transformations in Lemma \ref{OTrans} to eliminate all the concave and convex terms in the sequence of the widths of the middle layers $\{b_{j}\}_1^t$ (see Figure \ref{octa4} for an example). It means that the sequence of the widths of the middle layers becomes a monotone sequence.  Since $w_1=b_{1}> b_{t}=w_2$, we can assume that $b_{1}>b_{2}>\dots>b_{t}$.

The dual graph $K_j$ of the $j$-th middle layer is  now isomorphic to the baseless Aztec rectangle  $AR_{a_j-\frac{1}{2},b_j}$ reflected  about its base. We consider a process using the transformation in Lemma \ref{T6} to the middle part of $G$  as follows.
\begin{figure}\centering
\includegraphics[width=10cm]{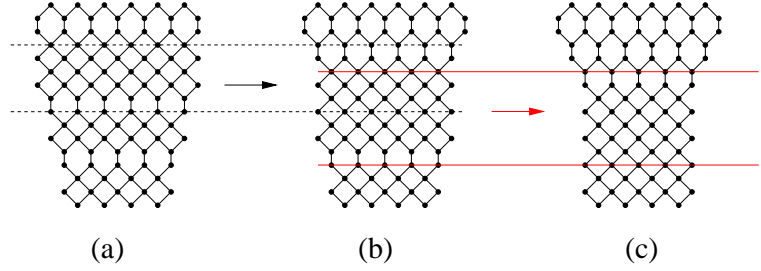}
\caption{Transforming process for the middle parts of a quasi-octagon when $w_1>w_2$.}
\label{process6}
\end{figure}

Assume that there  exists a middle layer other than the last one which has positive height. Assume that the $j_0$-th middle layer is the first such layer. Consider the graph $Q_1$ obtained from $K_{j_0}$ by removing all its top vertices and appending vertical edges to its bottom vertices.  Apply the transformation in Lemma \ref{T6} to replace $Q_1$ by the graph ${}^|OR_{a_{j_0},b_{j_0}-1}$. This transformed $G$ into the dual graph of new quasi-octagon $\mathcal{O}'$, which has the same upper and lower parts as $\mathcal{O}$, and the sequence of sizes of the middle layers
\[\big((0,b_1), (0,b_2), \dotsc,(0,b_{j_0}), (a_{j_0}+a_{j_0+1},b_{j_0+1}),(a_{j_0+2},b_{j_0+2}),\dotsc,(a_t,b_t)\big).\]
This step is illustrated in Figures \ref{process6}(a) and (b), the subgraph between two dotted lines in Figure \ref{process6}(a) is replaced by the one between these two lines in Figure \ref{process6}(b). Apply again the transformation in Lemma \ref{T6} to the graph $Q_2$ obtained from the dual graph of the $(j_0+1)$-th middle layer $\mathcal{O}'$ by removing the top vertices and appending $b_{j_0+1}$ vertical edges to bottom (see Figures \ref{process6}(b) and (c)), and so on. The procedure stops when the heights of all middle layers in the resulting region, except for the last one, are equal to $0$. Denote by $\mathcal{O}^*$ the final quasi-octagon, so $\mathcal{O}^*$ has the sequence of sizes of the middle layers
\[\big((0,b_1), (0,b_2),\dotsc,(0,b_{t-1}),(h_2,b_t)\big).\]

\begin{figure}\centering
\includegraphics[width=12cm]{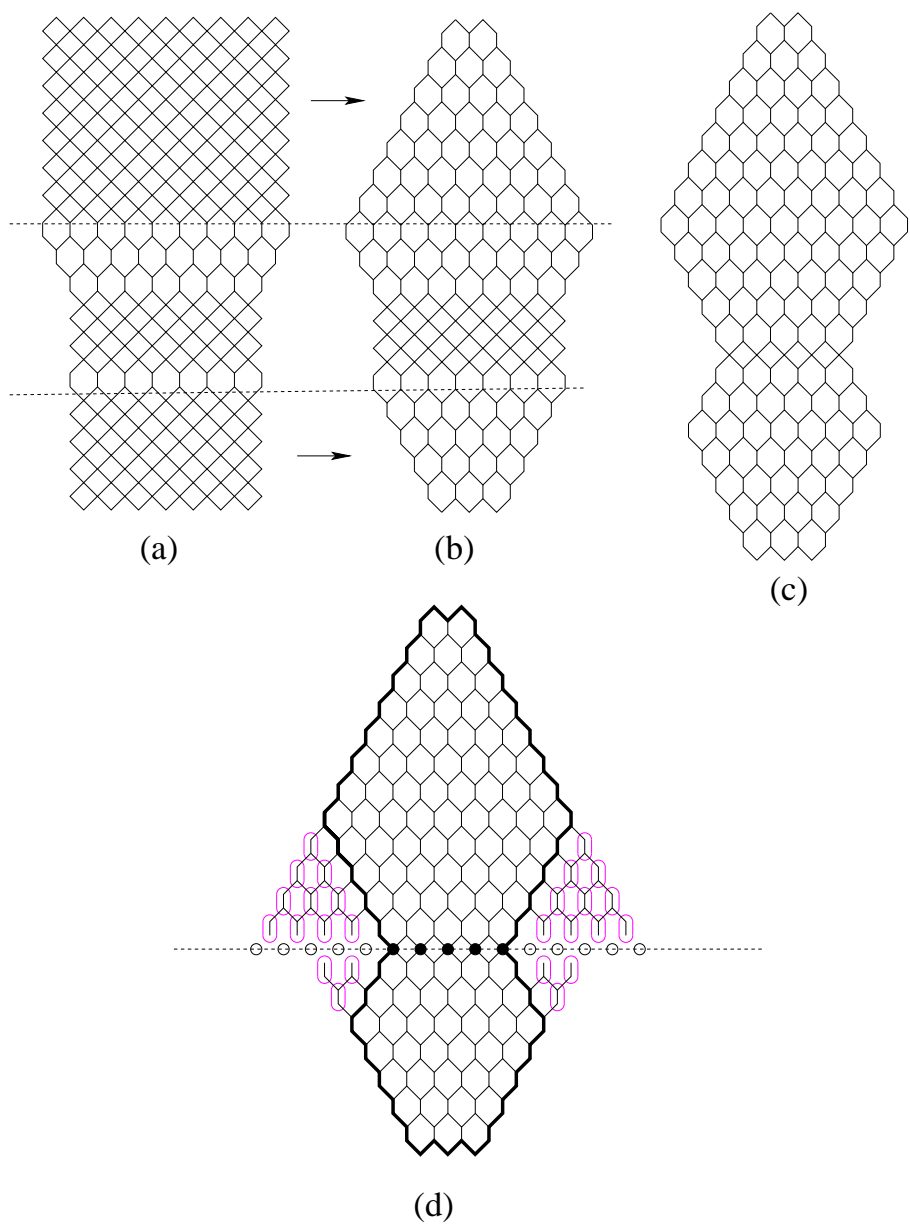}
\caption{Illustrating the proof of Theorem \ref{octagon3}.}
\label{generalocta}
\end{figure}

One readily sees that the above process preserves the heights, the widths, and the lower and upper parts of the quasi-octagon. Moreover, by Lemma \ref{T6} and the equality (\ref{middnumber}) in the proof of Theorem \ref{octagon1}, we get
\begin{align}
\M(\mathcal{O})/\M(\mathcal{O}^*)&=2^{a_{j_0}+(a_{j_0}+a_{j_0+1})+\dotsc+(a_{j_0}+\dotsc+a_{i-1})}\notag\\
&=2^{\mathcal{C}_2}/2^{\mathcal{C}^*_2}\notag\\
&=\mathcal{Q}(h_1,h_2,h_3,w_1,w_2,\mathcal{C}_1,\mathcal{C}_2,\mathcal{C}_3)/\mathcal{Q}(h^*_1,h^*_2,h^*_3,w^*_1,w^*_2,\mathcal{C}^*_1,\mathcal{C}^*_2,\mathcal{C}^*_3),
\end{align}
where the star symbols refer to the region $\mathcal{O}^*$ and denote quantities corresponding to their non-starred counterparts of $\mathcal{O}$, and where $\mathcal{Q}(h_1,h_2,h_3,w_1,w_2,\mathcal{C}_1,\mathcal{C}_2,\mathcal{C}_3)$ denotes the expression on the right hand side of (\ref{mainoc}). This implies that the statement of the theorem is true for $\mathcal{O}$ if and only its is true for $\mathcal{O}^*$. It means that we can assume that $a_{j}=0$, for all $1\leq j< t$.

\medskip

Next, we prove the theorem for the case when $k=l=1$ and $a_{j}=0$, for $1\leq j< t$ (illustrated in Figure \ref{generalocta}).

Similar to  the proof of Theorem \ref{octagon1} (Case 1, Step 3), we apply the transformation in Lemma \ref{masstransform2}(2) (in reverse) to transform the upper and lower parts of the dual graph $G$ of $\mathcal{O}$ into the dual graphs of two semi-hexagons (see Figures \ref{generalocta}(a) and (b)), and the transformation in Lemma \ref{OTrans}(2) to transform the dual graph $K_t$ of the last middle layer into a butterfly-shaped graph (see Figures \ref{generalocta}(b) and (c)). This way, $G$ gets transformed into the graph $\overline{G}=H_1\# H_2$, where $H_1$ is the dual graph of a hexagon of sides $w_1-h_1, h_1,h_2+w_1-w_2,w_2-h_2,h_2+w_1-w_2,h_1$ and  $H_2$ is the dual graph of a hexagon of sides $w_2-h_2, h_2,h_3,w_2-h_3,h_3,h_2$ (in cyclic order starting by the north side)  on the triangular lattice (see Figures \ref{generalocta}(a) and (c)). By Lemmas \ref{masstransform2} and \ref{OTrans}, we get
\begin{align}\label{octagong}
\M(\mathcal{O})&=2^{h_1(h_1-1)/2+h_2(h_2-1)/2+h_3(h_3-1)/2}\M(\overline{G})\notag\\
&=2^{\mathcal{C}_1+\mathcal{C}_2+\mathcal{C}_3-h_1(2w-h_1+1)/2-h_2(2w-h_2+1)/2-h_3(2w-h_3+1)/2}\M(\overline{G}).
\end{align}

The graph $\overline{G}$ is in turn  obtained from $\widetilde{G}:=S_1\#S_2$ by removing the vertical forced edges (shown by the circled edges in Figure \ref{generalocta}(d)), where $S_1$ is the dual graph of the $(h_1+h_2+w_1-w_2,w_1-h_1)$-semihexagon with $h_2+w_1-w_2$ leftmost and  $h_2+w_1-w_2$ rightmost bottom vertices removed, and $S_2$ is the dual graph of $(h_2+h_3,w_2-h_3)$-semihexagon with $h_2$ leftmost and $h_2$ rightmost bottom vertices removed; and where the connected sum acts on $S_1$ and $S_2$ along their bottommost vertices ordered from left to right (see Figure \ref{generalocta}(d)). Since removing forced edges does not change the number of perfect matchings of a graph, $\M(\overline{G})=\M(\widetilde{G})$.

There are $w_2-h_2$ vertices belonging to both $S_1$ and $S_2$; and we partition the set of perfect matchings of $\widetilde{G}$ into $2^{w_2-h_2}$ classes corresponding to all the possible choices for each of these vertices to be matched upward or downward. Each class is then the set of perfect matchings of a disjoint union of two graphs, being of the kind in Lemma \ref{Hexdent}. Part (a) follows from the requirement that the graph $\widetilde{G}$ has the numbers of vertices in two vertex classes equal, while part (b) follows from Lemma \ref{Hexdent}.
\end{proof}

\thispagestyle{headings}

\end{document}